\newtheoremstyle{kai}
{3pt}{3pt}{}{}{\bfseries}{.}{.5em}{}
\def\EquationsBySection{\def\theequation
{\thesection.\arabic{equation}}%
\@addtoreset{equation}{section}}
\newcommand\old[1]{}
\newcommand{\pend}{\hfill \thicklines \framebox(6.6,6.6)[l]{}}
\renewenvironment{proof}{\noindent {\it  Proof.} \rm}{\pend}
\newtheorem{theorem}{Theorem}[section]
\newtheorem{lemma}{Lemma}[section]
\newtheorem{corollary}{Corollary}[section]
\newtheorem{proposition}{Proposition}[section]
\begin{document}
\pagestyle{plain}

\title
{\bf On Regularity of Stochastic Convolutions of \\Functional Linear Differential\\ Equations with Memory}
\author{
\small{}
Kai Liu$^{a,\, b}$
\\
\\
\small{$^a$College of Mathematical Sciences,}\\
\small{Tianjin Normal University,}\\
\small{300387, Tianjin, P. R. China.}\\
\\
\small{$^b$Department of Mathematical Sciences,}\\
\small{School of Physical Sciences,}\\
\small{The University of Liverpool,}\\
\small{Peach Street, Liverpool, L69 7ZL, U.K.}\\
\small{E-mail: k.liu@liverpool.ac.uk}\\}

\date{}
\maketitle
\noindent {\bf Abstract:}  In this work, we consider the regularity property of stochastic convolutions for a class of  abstract linear stochastic retarded functional differential equations with unbounded operator coefficients. We  first establish some useful estimates on fundamental solutions which are time delay versions of those on $C_0$-semigroups. To this end, we develop a scheme of constructing the resolvent operators for the integrodifferential equations of Volterra type since the equation under investigation is of this type in each subinterval describing the segment of its solution. Then we apply these estimates to  stochastic convolutions of our equations to obtain the desired regularity property.

\vskip 30pt
\noindent {\bf Keywords:} Regularity property; Fundamental solution; Stochastic convolution.
\vskip 30pt
\noindent{\bf 2000 Mathematics Subject Classification(s):} 60H15, 60G15, 60H05.

\newpage

\section{Introduction}

We begin with an example of stochastic delay heat equations without exterior energy source to motivate our work.
Let $h_i:\, {\mathbb R}\to {\mathbb R}$, $i=0,\,1$, be two monotonically differentiable functions which satisfy the following conditions (see, e.g., Coleman and Gurtin \cite{cbdgme196705} and Nunziato \cite{njw1971})
\[
\begin{split}
x \cdot h_i(x) & \ge \gamma_{i, 1}|x|^p +\alpha_{i, 1},\hskip 15pt \forall\,x\in {\mathbb R},\\
|h_i(x)| &\le \gamma_{i, 2}|x|^{p-1} + \alpha_{i, 2},\hskip 15pt \forall\, x\in {\mathbb R},
\end{split}
\]
where $p\ge 2$, $\gamma_{i, j}>0$ and $\alpha_{i, j}\in {\mathbb R}$ for $i=0,\,1$ and $j=1,\,2$.
The non-Fourier heat conduction model with delay in the conductor $(0, \pi)\subset {\mathbb R}$ starts from the following constitutive equation
\begin{equation}
\label{01/10/17(1)}
q(t, x) = -h_0(\partial y(t, x)/\partial x)- h_1(y(t-r, x)),\hskip 15pt t\ge 0,\hskip 15pt x\in (0, \pi),
\end{equation}
 and the energy conservative equation without exterior energy sources
\begin{equation}
\label{01/10/17(200)}
d_tp(t, x) + \frac{\partial q(t, x)}{\partial x}dt=0,\hskip 15pt t\ge 0,\hskip 15pt x\in (0, \pi),
\end{equation}
where $y$ denotes the temperature, $q$ is the heat flux and $p$ is the internal energy which can be taken, in most situations, as the form: $p(t, x)=\kappa y(t, x)$, $\kappa>0$.
In practice, the assumption of zero exterior energy source is artificial, and a more realistic model is that the null exterior energy source is perturbed by a noise process, for example, a Gaussian white noise $b(x)\dot w(t, x)$, $b\in L^2(0, \pi)$. In other words, we replace (\ref{01/10/17(200)}) by the equation
\begin{equation}
\label{01/10/17(3)}
d_tp(t, x) + \frac{\partial q(t, x)}{\partial x}dt= b(x)dw(t, x),\hskip 15pt t\ge 0,\hskip 15pt x\in (0, \pi).
\end{equation}
 Then, by substituting (\ref{01/10/17(3)}) into (\ref{01/10/17(1)}), we obtain, for simplicity, letting $\kappa=1$, the following equation
 \begin{equation}
\label{01/10/17(10)}
 \begin{cases}
 dy(t, x) = \displaystyle\frac{\partial h_0(\partial y(t, x)/\partial x)}{\partial x}dt + \frac{\partial h_1(y(t-r, x))}{\partial x}dt+ b(x)dw(t, x),\hskip 10pt t\ge 0,\\
 y(0, \cdot)=\phi_0(\cdot)\in L^2(0, \pi),\,\,y(\theta, \cdot)=\phi_1(\theta, \cdot)\in W^{1, p}_0(0, \pi),\,\, \theta\in [-r, 0],\\
 y(t, 0)= y(t, \pi) =0,\,\,\,\,\,t\in (0, \infty).
 \end{cases}
 \end{equation}
Let $\Delta =d^2/dx^2$, $H=L^2(0, \pi)$, $V= H^1_0(0, \pi)$, $U=W^{1, p}_0(0, \pi)$ and $Bu=bu$, $u\in H$, $W(t)=w(t, \cdot)$, $g_0(u)=\displaystyle\frac{d h_0(du(x)/dx)}{dx}$, $g_1(u)=\displaystyle\frac{d h_1(u(x))}{dx}$ for any $u\in W^{1, p}_0(0, \pi)$. We have thus a stochastic differential equation with delay in $H$,
\begin{equation}
\label{01/10/17(4)}
\begin{cases}
dy(t) = g_0(y(t))dt + g_1(y(t-r))dt + BdW(t),\,\,\,\,t\ge 0,\\
y(0)=\phi_0,\,\,y_0=\phi_1,
\end{cases}
\end{equation}
where $U\subset H\subset U^*$ and $g_i$, $i=0,\,1$, is a continuous monotone operator from $U$ to $U^*$ such that
\[
\begin{split}
\langle u, g_i(u)\rangle_{U, U^*}&\ge \gamma_{i, 1}\|u\|^p_U +\alpha_{i, 1},\hskip 15pt \forall\,u\in U,\\
\|g_i(u)\|_{U^*}&\le \gamma_{i, 2}\|u\|^{p-1}_U + \alpha_{i, 2},\hskip 15pt \forall\, u\in U,
\end{split}
\]
where $p\ge 2$, $\gamma_{i, j}>0$ and $\alpha_{i, j}\in {\mathbb R}$ for $i=0,\,1$ and $j=,\,2$. In particular, if $h_0(x)=x$, $h_1(x)=\gamma x$, $\gamma>0$, $p=2$, then $g_0(u)=\Delta u$, $g_1(u)=\gamma(-\Delta)^{1/2} u$, $V=U$ and the equation (\ref{01/10/17(4)}) reduces to
\begin{equation}
\label{01/10/17(5)}
\begin{cases}
dy(t) = \Delta y(t)dt + \gamma(-\Delta)^{1/2} y(t-r)dt + BdW(t),\,\,\,\,t\ge 0,\\
y(0)=\phi_0,\,\,y_0=\phi_1.
\end{cases}
\end{equation}
The aim of this work is to investigate the regularity property of such stochastic systems  as (\ref{01/10/17(5)}).

The organization of this  work is as follows. In Section 2, we first introduce the deterministic linear retarded functional differential equation associated in our formulation of stochastic systems. We review the useful variation of constants formula for the equation under consideration by means of its fundamental solution. Also, we state some estimates about fundamental solutions which will play an important role in the subsequent investigation. By employing the main results, we establish in Section 3 the desired regularity property of stochastic convolutions. In Sections 4 and 5, we present the detailed proofs of the main theorem, i.e., Theorem 2.1, by following J. Pr\"uss's method of constructing the resolvent operators for the integrodifferential equations of Volterra type.

\section{Fundamental Solution}

We are concerned with the following linear retarded functional differential equation in a Banach space $X$,
\begin{equation}
\label{13/06/14(1)}
\begin{cases}
{d}y(t) = Ay(t)dt + A_1y(t-r)dt + \displaystyle\int^0_{-r} a(\theta)A_2 y(t+\theta)d\theta +f(t), \,\,\,\,t\ge 0,\\
y(0) = \phi_0, \,\,y(\theta) =\phi_1(\theta),\,\,\theta\in [-r, 0],\,\,\,\phi = (\phi_0, \phi_1),
\end{cases}
\end{equation}
where $r>0$ is some constant incurring the system delay, $a(\cdot)\in L^2([-r, 0], {\mathbb R})$ and $\phi = (\phi_0, \phi_1)$ is an appropriate initial datum.
Here $A: {\mathscr D}(A)\subset X\to X$ is the infinitesimal generator of an analytic semigroup $e^{tA}$, $t\ge 0$, and $A_1$, $A_2$ are two closed linear operators with domains  ${\mathscr D}(A_i)\supset {\mathscr D}(A)$, $i=1,\,2$, and $f$ is a continuous function with values in $X$.  For simplicity, we assume in this work that the $C_0$-semigroup $e^{tA}$ is negative type, i.e., there exist constants $M\ge 1$ and $\mu>0$ such that
\begin{equation}
\label{10/06/18(1)}
\|e^{tA}\|\le Me^{-\mu t},\hskip 15pt  \|Ae^{tA}\|\le M/t\hskip 15pt \hbox{for all}\hskip 15pt t> 0,
\end{equation}
and for $\gamma\in (0, 1)$, there exists a constant $M_\gamma>0$ such that
\begin{equation}
\label{10/06/18(2)}
  \|(-A)^\gamma e^{tA}\|\le M_\gamma/t^\gamma\hskip 15pt \hbox{for all}\hskip 15pt t> 0,
\end{equation}
where $(-A)^\gamma$ is the standard fractional power of operator $A$.

Equations of the type (\ref{13/06/14(1)}) were investigated by Di Blasio, Kunisch and Sinestrari \cite{Gdbkkes85(2)}, Sinestrari \cite{es83, es84} and the fundamental solution to (\ref{13/06/14(1)}) was introduced by Jeong, Nakagiri and Tanabe \cite{jjsnht93}. In particular, it is known that the fundamental solution $G(\cdot): {\mathbb R}\to {\mathscr L}(X)$ to (\ref{13/06/14(1)}) is an operator-valued function which is strongly continuous in $X$ and satisfies
\begin{equation}
\label{27/05/18(1)}
\begin{split}
\displaystyle\frac{d}{dt}G(t) &= AG(t) + A_1G(t-r) + \int^0_{-r} a(\theta)A_2G(t+\theta)d\theta,\\
&\hskip 20pt G(0)=I,\hskip 15pt G(t) ={\rm O},\,\,\,\,\,\,\,t\in (-\infty, 0),
\end{split}
\end{equation}
where  ${\rm O}$ is the null operator in $X$.
According to the well-known Duhamel's principle, the problem (\ref{27/05/18(1)}) is transformed to the integral equation
\begin{equation}
\label{25/05/06(1578)}
G(t) = \begin{cases}
e^{tA} + \displaystyle\int^t_0 e^{(t-s)A} A_1G(s-r)ds  + \displaystyle\int^t_0 \displaystyle\int^0_{-r} a(\theta) e^{(t-s)A} A_2 G(s+\theta)d\theta ds,\hskip 15pt &t\ge 0,\\
{\rm O},\hskip 15pt &t< 0.
\end{cases}
\end{equation}
The fundamental solution $G$ enables us to solve the initial value problem for the equation (\ref{13/06/14(1)}). In fact, it may be shown that under some reasonable conditions on $f$ and initial datum $\phi=(\phi_0, \phi_1)$, the unique mild solution $y$ of (\ref{13/06/14(1)}) is represented as
\begin{equation}
\label{27/05/18(2)}
y(t) = G(t)\phi_0 + \int^0_{-r} U_t(\theta)\phi_1(\theta)d\theta + \int^t_0 G(t-s)f(s)ds,\hskip 15pt t\ge 0,
\end{equation}
where
\[
U_t(\theta) = G(t-\theta-r)A_1 + \int^\theta_{-r} G(t-s+\tau)a(\tau)A_2d\tau,\hskip 20pt \theta\in [-r, 0],\]
with the initial condition $y(0)=\phi_0$ and $y(\theta)=\phi_1(\theta)$, $\theta\in [-r, 0)$. This is a time delay version of the usual variation of constants formula without memory.

In order to apply (\ref{27/05/18(2)}) to such equations as (\ref{01/10/17(5)}) to consider their regularity property of solutions, we need establish some inequalities in association with $G(\cdot)$, which are the main results of this work. To this end, we shall formulate the following condition:
\begin{enumerate}
\item[(H)]
$a(\cdot)\in L^\infty([-r, 0], {\mathbb R})$ and
\begin{equation}
\label{14/09/18(1)}
 {\mathscr D}((-A)^\mu)\subset {\mathscr D}(A_1),\hskip 20pt  {\mathscr D}((-A)^\nu)\subset {\mathscr D}(A_2)
\end{equation}
for some $0< \mu, \,\nu< 1$.
\end{enumerate}

\begin{theorem}
\label{21/05/18(1)}
Assume that condition (H) holds. Then
\begin{enumerate}
\item[(a)] for any $\gamma\in [\nu, 1)$, it is true that
\begin{align}
&\displaystyle\| (-A)^{\gamma}G(t)\|\le \frac{C_{n, \gamma}}{(t-nr)^{\gamma}}\hskip 15pt \hbox{for all}\hskip 15pt t\in (nr, (n+1)r],\label{4567}\\
&\Big\|\displaystyle\int^t_{s}(-A)^\gamma G(u)du\Big\|\le C_{n, \gamma}\hskip 15pt \hbox{for all}\hskip 15pt nr\le s< t\le (n+1)r,\label{4569}
\end{align}
where $C_{n, \gamma}>0$, $n\in {\mathbb N} := \{0,\,1,\,\cdots\}$, are constants depending on $n$ and $\gamma$.
\item[(b)] for any $\gamma\in [\nu, 1)$ and $0<\beta< 1-\gamma$, it is true that
\begin{align}
&\|(-A)^\gamma (G(t)-G(s))\|\le C_{n, \beta, \gamma}\displaystyle\frac{(t-s)^{\beta}}{(s-nr)^{\beta+\gamma}}\hskip 10pt \hbox{for all}\hskip 10pt nr< s< t<  (n+1)r,\label{14/06/18(80)}
\end{align}
where $C_{n, \beta, \gamma}>0$, $n\in {\mathbb N}$, are constants depending on $n,$ $\beta$ and $\gamma$.
\end{enumerate}
\end{theorem}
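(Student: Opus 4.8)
The plan is to prove both parts by induction on the interval index $n$, exploiting the observation underlying \eqref{25/05/06(1578)}: on each subinterval $(nr,(n+1)r]$ the fundamental solution $G$ solves an inhomogeneous problem whose memory splits into a part inherited from the earlier intervals (controlled by the inductive hypothesis) and a self-referential part that is smoothed by the $\theta$-integration. Throughout I write $c_1=\|A_1(-A)^{-\mu}\|$ and $c_2=\|A_2(-A)^{-\nu}\|$, which are finite by condition (H) and the closed graph theorem, so that $\|A_1G(v)\|\le c_1\|(-A)^\mu G(v)\|$ and $\|A_2G(v)\|\le c_2\|(-A)^\nu G(v)\|$. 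For the base case $n=0$ I would specialize \eqref{25/05/06(1578)} to $t\in(0,r]$, where the term $A_1G(s-r)$ vanishes identically and the inner $A_2$-integral only runs over those $\theta$ with $s+\theta\ge0$. Applying $(-A)^\nu$ and setting $\psi(t)=\|(-A)^\nu G(t)\|$, the change of variable $v=s+\theta$ turns the memory term into $\int_0^s\psi(v)\,dv$, giving
\[
\psi(t)\le \frac{M_\nu}{t^\nu}+c_2\|a\|_\infty M_\nu\int_0^t\frac{1}{(t-s)^\nu}\Big(\int_0^s\psi(v)\,dv\Big)\,ds .
\]
The crucial point is that the inner integration renders the kernel only weakly singular, so a generalized Gronwall argument (equivalently, summing the Neumann series of the iterated kernel, which is exactly the resolvent construction carried out in Sections 4--5) yields $\psi(t)\le C_{0,\nu}t^{-\nu}$. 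Once $\int_0^s\psi(v)\,dv$ is bounded, the same inequality with $(-A)^\gamma$ in place of $(-A)^\nu$ (legitimate because $\gamma\ge\nu$ makes $(-A)^{\nu-\gamma}$ bounded, whence $\|A_2G(v)\|\le c_2\|(-A)^{\nu-\gamma}\|\,\|(-A)^\gamma G(v)\|$) gives \eqref{4567} for every $\gamma\in[\nu,1)$ on $(0,r]$, since $\int_0^t(t-s)^{-\gamma}\,ds\le r^{1-\gamma}/(1-\gamma)$.

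For the inductive step I pass from $[0,(n+1)r]$ to $t\in((n+1)r,(n+2)r]$. Here the delayed argument $s-r$ of the $A_1$-term and the history part of the $A_2$-term (those $s+\theta\le(n+1)r$) both lie in already-controlled intervals, so they contribute a known forcing term; a direct estimate using \eqref{10/06/18(2)}, the inductive bound \eqref{4567} and the integrability of $(v-kr)^{-\gamma}$, $(v-kr)^{-\mu}$ shows this forcing is bounded by $C(t-(n+1)r)^{-\gamma}$ (in fact the $A_1$-contribution produces at worst a $(t-(n+1)r)^{1-\gamma-\mu}$, which is milder since $\mu<1$). Only the current part of the $A_2$-memory, $s+\theta\in((n+1)r,s]$, is self-referential, and after the substitution $v=s+\theta$ it has exactly the smoothed form above with the origin shifted to $(n+1)r$. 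The same weakly-singular Gronwall argument then yields \eqref{4567} on $((n+1)r,(n+2)r]$. Finally \eqref{4569} is immediate from \eqref{4567}, since $\int_s^t(u-nr)^{-\gamma}\,du\le\int_{nr}^{(n+1)r}(u-nr)^{-\gamma}\,du=r^{1-\gamma}/(1-\gamma)$ for all $\gamma\in[\nu,1)$.

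For part (b) I would use the local variation-of-constants representation valid within a single interval: from \eqref{27/05/18(1)}, $G$ satisfies $G'(u)=AG(u)+F(u)$ on $(nr,(n+1)r]$ with $F(u)=A_1G(u-r)+\int_{-r}^0a(\theta)A_2G(u+\theta)\,d\theta$, hence for $nr<s<t<(n+1)r$
\[
G(t)-G(s)=\big(e^{(t-s)A}-I\big)G(s)+\int_s^t e^{(t-s')A}F(s')\,ds'.
\]
Applying $(-A)^\gamma$, the first term is controlled by the analytic-semigroup moment inequality $\|(-A)^\gamma(e^{\delta A}-I)x\|\le C\delta^\beta\|(-A)^{\gamma+\beta}x\|$ (a consequence of \eqref{10/06/18(2)}) together with part (a) at the admissible exponent $\gamma+\beta\in[\nu,1)$, giving precisely
\[
\big\|(-A)^\gamma(e^{(t-s)A}-I)G(s)\big\|\le C(t-s)^\beta\|(-A)^{\gamma+\beta}G(s)\|\le C_{n,\beta,\gamma}\frac{(t-s)^\beta}{(s-nr)^{\gamma+\beta}} .
\]
For the Duhamel term one uses that $F$ carries only the integrable singularity $\|F(s')\|\le C(s'-nr)^{-\mu}$ at the left endpoint---produced by $A_1G(s'-r)$, the memory part being bounded thanks to the $\theta$-smoothing---and estimates $\int_s^t(t-s')^{-\gamma}(s'-nr)^{-\mu}\,ds'$ against the target by keeping the singularity inside the integral (a crude pull-out is too lossy); after the scaling $s'-nr=(t-nr)\eta$ this reduces to a Beta integral and, using $\beta<1-\gamma$ and $s-nr\le t-nr$, fits within $C(t-s)^\beta(s-nr)^{-(\gamma+\beta)}$.

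The main obstacle is the self-referential weakly singular Volterra inequality in part (a): closing it for an arbitrary (not necessarily small) delay $r$ is exactly what forces the resolvent-operator construction \`a la Pr\"uss promised for Sections 4--5, rather than a one-step contraction. A secondary delicate point, in part (b), is the sharp bookkeeping of the exponents in the Duhamel term: the estimate fits the stated bound only because $\gamma+\beta\ge\nu$ and the forcing singularity $\mu$ is dominated by $\gamma+\beta$ in the relevant range, so the Hölder slack $1-\gamma-\beta>0$ can be traded against powers of $s-nr$ and $t-s$.
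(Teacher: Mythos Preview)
Your outline is largely correct and shares the paper's inductive skeleton, but the organization differs in a way worth recording. For part (a), the paper does not leave the singular inhomogeneity in place: it sets $V(t)=(-A)^\gamma(G(t)-e^{tA})$ on $(0,r]$ (and, at level $n$, subtracts the $A_1$--Duhamel piece instead), proves the integrated kernel $\Gamma(t)=\int_0^t(-A)^\gamma e^{(t-s)A}a(-s)\,ds$ is uniformly bounded and H\"older (Proposition~\ref{31/05/18(1)}), and then runs an \emph{ordinary} Gronwall on the bounded-kernel Volterra equation $V=V_0+\Gamma*(A_2(-A)^{-\gamma}V)$. Your route---keep $M_\nu t^{-\nu}$ and use Fubini so that the effective kernel acting on $\psi$ becomes $(t-s)^{1-\nu}/(1-\nu)$---is really the same mechanism in different clothing (your regularized kernel is exactly the paper's $\Gamma$ up to the factor $a$), so the ``resolvent construction'' you defer to collapses to a bounded-kernel Gronwall rather than a genuinely weakly-singular one. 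Your remark that \eqref{4569} follows from \eqref{4567} by integrating $(u-nr)^{-\gamma}$ is correct and is also what the paper does.

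For part (b) your approach is genuinely different and more economical. The paper re-uses the $V$-decomposition and proves H\"older continuity of $V_0$ term by term (five pieces $I_1,\dots,I_5$, several further split into $J_k$ and $K_k$), then transfers to $V$ via Gronwall and separately handles $(-A)^\gamma\int_{nr}^t e^{(t-s)A}A_1G(s-r)\,ds$. Your local Duhamel identity $G(t)-G(s)=(e^{(t-s)A}-I)G(s)+\int_s^t e^{(t-s')A}F(s')\,ds'$ short-circuits most of this: the first term reduces (b) directly to (a) at the admissible exponent $\gamma+\beta$, and the second is a single Beta-type integral. One caveat on your exponent bookkeeping: the bound $\|F(s')\|\le C(s'-nr)^{-\mu}$ tacitly applies part (a) at exponent $\mu$, which is only stated for exponents $\ge\nu$; if $\mu<\nu$ you must replace $\mu$ by some $\sigma\in[\max(\mu,\nu),1)$. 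Moreover the closing inequality does not rely on ``$\mu$ dominated by $\gamma+\beta$'' (which can fail, e.g.\ $\gamma=\nu$, $\beta$ small, $\mu>\nu$) but only on $\sigma<1$: after the substitution you get $(t-s)^\beta(t-nr)^{1-\gamma-\beta-\sigma}$, and using $s-nr\le t-nr$, $t-s\le t-nr$ this is $\le r^{1-\sigma}(t-s)^\beta(s-nr)^{-(\gamma+\beta)}$. This is precisely why the paper works throughout with $A_i(-A)^{-(1-\delta)}$ for small $\delta$ rather than with $A_1(-A)^{-\mu}$.
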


\section{Stochastic Convolution}

Let $\{\Omega, {\mathscr F}, {\mathbb P}\}$ be a
 probability space equipped with some filtration $\{\mathscr{F}_{t}\}_{t\geq 0}$. Let $K$ be a separable Hilbert space and  $\{W_Q(t),\,t\ge 0\}$ denote a $Q$-Wiener process with respect to $\{\mathscr{F}_{t}\}_{t\geq 0}$ in $K$, defined on  $\{\Omega, {\mathscr F}, {\mathbb P}\}$, with covariance operator $Q$, i.e.,
\[
\mathbb{E}\langle W_Q(t), x\rangle_K\langle W_Q(s), y\rangle_K = (t\wedge s)\langle Qx, y\rangle_K\,\,\,\,\hbox{for all}\,\,\,\,\, x,\,\,y\in K,\]
where $Q$ is a positive, self-adjoint and trace class operator on $K$. We frequently call $W_Q(t)$, $t\ge 0$, a $K$-valued $Q$-Wiener process with respect to $\{\mathscr{F}_{t}\}_{t\geq 0}$ if the trace $Tr(Q)<\infty$.
We introduce a subspace $K_Q={\mathscr R}(Q^{1/2})\subset K$, the range of $Q^{1/2}$, which is a Hilbert space endowed with the inner product
\[
\langle u, v\rangle_{K_Q} =\langle Q^{-1/2}u, Q^{-1/2}v\rangle_K\hskip 10pt\hbox{for any}\hskip 10pt  u,\,\,v\in K_Q.\]
Let $H$ be a separable Hilbert space and ${\mathscr L}_2(K_Q, H)$ denote the space of all Hilbert-Schmidt operators from $K_Q$ into $H$. Then ${\mathscr L}_2(K_Q, H)$ turns out to be a separable Hilbert space, equipped with the norm
\[
\|\Psi\|^2_{{\mathscr L}_2(K_Q, H)} =Tr [\Psi Q^{1/2}(\Psi Q^{1/2})^*]\hskip 15pt \hbox{for any}\,\,\,\,\Psi\in {\mathscr L}_2(K_Q, H).\]
For arbitrarily given $T\ge 0$, let
$J(t,\omega)$, $t\in[0,T]$, be an ${\mathscr L}_2(K_Q, H)$-valued process, and we define the following norm for arbitrary $t\in[0,T]$,
\begin{equation}
\label{11/02/09(10)}
|J|_t :=\biggl\{\mathbb{E}\int^t_0 Tr\Big[J(s,\omega)Q^{1/2}(J(s,\omega)Q^{1/2})^*\Big]ds\biggr\}^{\frac{1}{2}}.
\end{equation}
In particular, we denote by ${\cal U}^2\big([0,T]; \,{\mathscr L}_2(K_Q, H)\big)$ the space of all ${\mathscr L}_2(K_Q, H)$-valued measurable processes $J$, adapted to the filtration $\{{\mathscr F}_t\}_{t\le T}$, satisfying $|J|_T <
\infty$.

Suppose that $W(\cdot)$ is a $Q$-Wiener process in $K$ such that
$Qe_j= \lambda_j e_j$, $\lambda_j>0$, $j\ge 1$,
where $\{e_j\}$ is a complete orthonormal basis in $K$, then it is immediate that
\[
W(t) = \sum^\infty_{j=1}\sqrt{\lambda_j}w_j(t)e_j,\hskip 15pt t\ge 0,\]
where $\{w_j(\cdot)\}$ is a group of independent real Wiener processes.
The stochastic integral $\displaystyle\int^t_0 J(s)dW(s) \in H$, $t\ge 0$, may be defined
 for all $J\in {\cal U}^2([0,T]\times\Omega;\, {\mathscr L}_2(K_Q, H))$ by
\begin{equation}
\label{08/06/11(1089)}
\int^t_0 J(s)dW(s) = L^2 - \lim_{n\rightarrow \infty}
\sum^n_{j=1} \int^t_0 \sqrt{\lambda_j}J(s)e_j dw_j(s),\hskip 15pt
t\in [0, T].
\end{equation}
The reader is referred to Da Prato and Zabczyk  \cite{gdajz92} for more details on this topic.

We are concerned about the following linear stochastic retarded functional differential equation on $H$,
\begin{equation}
\label{13/06/14(1098)}
\begin{cases}
{d}y(t) = Ay(t)dt + A_1y(t-r)dt + \displaystyle\int^0_{-r} a(\theta)A_2 y(t+\theta)d\theta + BdW(t), \,\,\,\,t\ge 0,\\
y(0) = \phi_0, \,\,y(\theta) =\phi_1(\theta),\,\,\theta\in [-r, 0],\,\,\,\phi = (\phi_0, \phi_1),
\end{cases}
\end{equation}
where $A$, $A_1$, $A_2$ are given as in Section 2, $B\in {\mathscr L}_2(K_Q, H)$ and $\phi=(\phi_0, \phi_1)$ is an appropriate initial datum.
It is well known that the unique mild solution $y$ of (\ref{13/06/14(1098)}) is represented as
\begin{equation}
\label{27/05/18(20953)}
y(t) = G(t)\phi_0 + \int^0_{-r} U_t(\theta)\phi_1(\theta)d\theta + \int^t_0 G(t-s)BdW(s),\hskip 15pt t\ge 0,
\end{equation}
where
\[
U_t(\theta) = G(t-\theta-r)A_1 + \int^\theta_{-r} G(t-s+\tau)a(\tau)A_2d\tau,\hskip 20pt \theta\in [-r, 0],\]
with the initial condition $y(0)=\phi_0$ and $y(\theta)=\phi_1(\theta)$, $\theta\in [-r, 0)$. In particular, if $\phi=(0, 0)$, then the unique mild solution (\ref{27/05/18(20953)}) is the so-called {\it stochastic convolution\/} process
\begin{equation}
\label{27/05/18(2095367)}
y(t) := W_G(t) = \int^t_0 G(t-s)BdW(s),\hskip 15pt t\ge 0.
\end{equation}
For any $T>0$, let $C^\beta([0, T]; H)$ denote the usual Banach space of all H\"older continuous functions on $H$ with order $\beta\in (0, 1)$. The following lemma is referred to Tanabe \cite{ht88(1)}.

\begin{lemma}
\label{06/06/18(1)}
Suppose that $a(\cdot)$ in (\ref{13/06/14(1098)}) is H\"older continuous with order $\rho\in (0, 1)$, then operator  $G(t)$ is strongly continuous in $H$ on each $[nr, (n+1)r]$, $n=0,\,1,\,\cdots$. Moreover, the following estimates hold:
\begin{equation}
\|G(t)-G(s)\|\le C_{n, \kappa}\Big(\displaystyle\frac{t-s}{s-nr}\Big)^{\kappa}\hskip 15pt \hbox{for all}\hskip 15pt nr< s< t< (n+1)r,
\end{equation}
where $\kappa\in (0, \rho)$ and $C_{n, \kappa}>0$ are some constants depending on $n,$ $\kappa$.
\end{lemma}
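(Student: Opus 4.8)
The plan is to read the integral equation (\ref{25/05/06(1578)}) as a parabolic Volterra equation on each delay interval $[nr,(n+1)r]$ and to propagate the asserted H\"older bound by induction on $n$, the single-interval case being exactly of the type treated by Tanabe \cite{ht88(1)}. I fix $\kappa\in(0,\rho)$ and, for $nr<s<t<(n+1)r$, decompose
\[
G(t)-G(s)=\big(e^{tA}-e^{sA}\big)+\Delta_1(s,t)+\Delta_2(s,t),
\]
where $\Delta_1,\Delta_2$ denote the increments of the two convolution terms in (\ref{25/05/06(1578)}) carrying $A_1$ and the memory kernel $a(\cdot)A_2$, respectively.

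The free term already carries the whole mechanism. Factoring $e^{tA}-e^{sA}=(e^{(t-s)A}-I)e^{sA}$ and combining the bound $\|(e^{\sigma A}-I)(-A)^{-\kappa}\|\le C\sigma^{\kappa}$ (which itself follows from (\ref{10/06/18(2)})) with (\ref{10/06/18(2)}) in the form $\|(-A)^{\kappa}e^{sA}\|\le M_\kappa s^{-\kappa}$ yields
\[
\|e^{tA}-e^{sA}\|\le C\,(t-s)^{\kappa}\,\frac{M_\kappa}{s^{\kappa}}=C_{0,\kappa}\Big(\frac{t-s}{s}\Big)^{\kappa},
\]
which is precisely the claim on $[0,r]$ (there $nr=0$) and displays simultaneously the H\"older factor $(t-s)^{\kappa}$ and the endpoint weight $(s-nr)^{-\kappa}$.

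For the coefficient terms I would use (H) to write $A_iG=\big(A_i(-A)^{-\nu}\big)(-A)^{\nu}G$ with $A_i(-A)^{-\nu}$ bounded, so that (\ref{4567}) of Theorem \ref{21/05/18(1)} controls $\|(-A)^{\nu}G(u)\|$ by the integrable singular weight $(u-mr)^{-\nu}$ on each subinterval. I then estimate each increment by the usual short-range/long-range split,
\[
\Delta_i(s,t)=\int_s^t(\cdots)\,d\sigma+\int_0^s\big[e^{(t-\sigma)A}-e^{(s-\sigma)A}\big](\cdots)\,d\sigma,
\]
where the long-range part is controlled by the semigroup-difference bound $\|e^{(t-\sigma)A}-e^{(s-\sigma)A}\|\le C(t-s)^{\kappa}(s-\sigma)^{-\kappa}$, obtained by factoring out $(e^{(t-s)A}-I)(-A)^{-\kappa}$ and $(-A)^{\kappa}e^{(s-\sigma)A}$, while the H\"older continuity of $a(\cdot)$ of order $\rho>\kappa$ absorbs the variation of the kernel in the memory increment. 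On the first interval $G(\sigma-r)\equiv{\rm O}$, so $\Delta_1$ vanishes and the induction is seeded by the free and memory terms alone; on $[nr,(n+1)r]$ the arguments $\sigma-r$ and $\sigma+\theta$ fall into $[(n-1)r,nr]$, where the inductive hypothesis already supplies the H\"older bound, and re-running the split closes the induction (the constants $C_{n,\kappa}$ being allowed to grow with $n$).

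The main obstacle will be bookkeeping the endpoint singularity under convolution: one must verify that convolving the integrable-but-singular weight $(\sigma-nr)^{-\nu}$ against the semigroup neither creates nor worsens the singularity at $s=nr$, i.e.\ that an estimate of the form $\int_{nr}^{s}(s-\sigma)^{-\kappa}(\sigma-nr)^{-\nu}\,d\sigma\le C(s-nr)^{1-\kappa-\nu}$ reproduces the weight $(s-nr)^{-\kappa}$ rather than accumulating extra powers; this is exactly where the restriction $\kappa<\rho$ and the smoothing gain $1-\nu>0$ are used. Finally, strong continuity on each closed $[nr,(n+1)r]$ comes out as a by-product: the H\"older estimate gives continuity on the open interval, the right-endpoint limit follows from dominated convergence in (\ref{25/05/06(1578)}), and at the left endpoint the value $G(nr^{+})$ is matched to $G(nr)$ through the same representation.
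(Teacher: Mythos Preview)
The paper does not prove this lemma at all; the sentence immediately preceding it reads ``The following lemma is referred to Tanabe \cite{ht88(1)}'', and no argument is given. So there is no in-paper proof to compare against, and the intended route is whatever Tanabe does in \cite{ht88(1)}.

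Your overall architecture---induction over the delay intervals, the decomposition $G(t)-G(s)=(e^{tA}-e^{sA})+\Delta_1+\Delta_2$, and the short-range/long-range split of each convolution increment via $\|e^{(t-\sigma)A}-e^{(s-\sigma)A}\|\le C(t-s)^{\kappa}(s-\sigma)^{-\kappa}$---is exactly the kind of parabolic-Volterra bookkeeping that drives both Tanabe's argument and the paper's later Sections~4--5, so structurally you are on the right track.

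There is, however, a real hypothesis mismatch in your plan. You control $A_iG$ by writing $A_iG=(A_i(-A)^{-\nu})(-A)^{\nu}G$ and then invoking the fractional bound (\ref{4567}) of Theorem~\ref{21/05/18(1)}. Both of these require condition (H)---the domain inclusions ${\mathscr D}((-A)^{\mu})\subset{\mathscr D}(A_1)$, ${\mathscr D}((-A)^{\nu})\subset{\mathscr D}(A_2)$---which is \emph{not} assumed in Lemma~\ref{06/06/18(1)}. The lemma's only extra hypothesis beyond the standing framework of Section~2 is the H\"older continuity of $a$; the standing framework gives you merely ${\mathscr D}(A)\subset{\mathscr D}(A_i)$, i.e.\ $A_iA^{-1}\in{\mathscr L}(X)$. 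A symptom of this mismatch is the role of $\rho$: in your scheme the H\"older regularity of $a$ is almost incidental (Theorem~\ref{21/05/18(1)} needs only $a\in L^{\infty}$), yet the conclusion restricts $\kappa<\rho$, which signals that in Tanabe's argument the H\"older exponent of $a$ is what actually caps the attainable H\"older exponent of $G$. If you want to prove the lemma as stated, you must drop the appeal to (H) and to Theorem~\ref{21/05/18(1)} and instead control the memory term directly through $A_2A^{-1}$ together with an $\|AG\|$-type estimate obtained from the Volterra structure, with the H\"older continuity of $a$ entering when you estimate the kernel increment in $\Delta_2$; that is where the restriction $\kappa<\rho$ genuinely appears.
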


\begin{theorem}
Suppose that $a(\cdot)$ in (\ref{13/06/14(1098)}) is H\"older continuous with order $\rho\in (0, 1)$.
Let $T>0$. Assume that $Tr(Q)<\infty$ and $B\in {\mathscr L}(K, H)$, the space of all bounded, linear operators from $K$ into $H$, then  the trajectories of $W_G$ are in $C^\beta([0, T]; H)$ where
\[
\beta< \begin{cases}
1/2\hskip 20pt  \hbox{if}\hskip 15pt  \rho>1/2,\\
\rho\hskip 31pt \hbox{if}\hskip 15pt \rho\le 1/2.
\end{cases}
\]
\end{theorem}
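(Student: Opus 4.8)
The plan is to reduce the path-regularity assertion to a single second-moment estimate on increments of $W_G$, then exploit the Gaussian nature of the convolution to pass to arbitrarily high moments, and finally invoke Kolmogorov's continuity criterion. Fix $0< t-s\le r$ (Hölder continuity is a local property, so this causes no loss). Since by Lemma~\ref{06/06/18(1)} the operator $G$ is strongly continuous on each $[nr,(n+1)r]$ and $[0,T]$ meets only finitely many such subintervals, the Banach--Steinhaus theorem gives a finite bound $M_T:=\sup_{0\le \tau\le T}\|G(\tau)\|<\infty$. Moreover, because $Tr(Q)<\infty$ and $B\in{\mathscr L}(K,H)$, for every $\tau\in[0,T]$ one has $\|G(\tau)B\|_{{\mathscr L}_2(K_Q,H)}=\|G(\tau)BQ^{1/2}\|_{{\mathscr L}_2(K,H)}\le \|G(\tau)\|\,\|B\|\,Tr(Q)^{1/2}$, so all integrands below are genuinely Hilbert--Schmidt.

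First I would write, using $G(\sigma)={\rm O}$ for $\sigma<0$, the increment as $W_G(t)-W_G(s)=\int_0^t[G(t-u)-G(s-u)]B\,dW(u)$, and apply the It\^o isometry to obtain (writing $\|\cdot\|_{{\mathscr L}_2}$ for the norm of ${\mathscr L}_2(K_Q,H)$)
\[
\mathbb{E}\,|W_G(t)-W_G(s)|^2=\int_0^s\|[G(t-u)-G(s-u)]B\|_{{\mathscr L}_2}^2\,du+\int_s^t\|G(t-u)B\|_{{\mathscr L}_2}^2\,du=:I_1+I_2.
\]
The near-diagonal term is immediate: $I_2\le M_T^2\|B\|^2Tr(Q)\,(t-s)$. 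For $I_1$ I would substitute $v=s-u$ and use the Hilbert--Schmidt bound above to reduce to controlling $\int_0^s\|G(v+(t-s))-G(v)\|^2\,dv$. Fix $\kappa\in(0,\rho)$ with additionally $\kappa<1/2$. On each subinterval $(nr,(n+1)r)$ Lemma~\ref{06/06/18(1)} gives $\|G(v+(t-s))-G(v)\|\le C_{n,\kappa}\big((t-s)/(v-nr)\big)^{\kappa}$, so squaring and integrating produces $(t-s)^{2\kappa}\int (v-nr)^{-2\kappa}\,dv$, which is finite precisely because $2\kappa<1$; summing over the finitely many subintervals met by $[0,s]$ gives a contribution $\le C_T(t-s)^{2\kappa}$.

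The delicate point --- and the main obstacle --- is that Lemma~\ref{06/06/18(1)} holds only \emph{within} an open subinterval, whereas the translated integrand compares $G(v)$ with $G(v+(t-s))$ across the break points $v=nr$. I would isolate the ``straddling'' set $\{v:\ v<nr<v+(t-s)\ \text{for some }n\}$, whose total Lebesgue measure is at most $C_T(t-s)$ (each break point contributes an interval of length $t-s$, and there are finitely many); on it I simply bound $\|G(v+(t-s))-G(v)\|\le 2M_T$ by the uniform bound, contributing $\le C_T(t-s)\le C_T(t-s)^{2\kappa}$ since $2\kappa\le 1$ and $t-s\le r$. Off the straddling set both arguments lie in a common subinterval, so the previous paragraph applies. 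Combining the two regimes, $I_1\le C_T(t-s)^{2\kappa}$, whence $\mathbb{E}\,|W_G(t)-W_G(s)|^2\le C_T(t-s)^{2\kappa}$.

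Finally, $W_G(t)-W_G(s)$ is a centred Gaussian $H$-valued random variable, so all its moments are controlled by its second moment: for every integer $m\ge1$ there is $c_m>0$ with $\mathbb{E}\,|W_G(t)-W_G(s)|^{2m}\le c_m\big(\mathbb{E}\,|W_G(t)-W_G(s)|^2\big)^m\le c_mC_T^m\,(t-s)^{2\kappa m}$. Choosing $m>1/(2\kappa)$ makes the exponent exceed $1$, so Kolmogorov's continuity criterion (see, e.g., \cite{gdajz92}) furnishes a modification of $W_G$ whose trajectories are H\"older continuous of every order less than $\kappa-\tfrac1{2m}$; letting $m\to\infty$ yields every order $<\kappa$. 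As $\kappa$ was an arbitrary number in $(0,\rho)\cap(0,1/2)$, the trajectories lie in $C^\beta([0,T];H)$ for every $\beta<\min\{\rho,1/2\}$, which is exactly the asserted range: $\beta<1/2$ when $\rho>1/2$ and $\beta<\rho$ when $\rho\le1/2$.
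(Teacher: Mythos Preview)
Your proof is correct and follows essentially the same route as the paper's: It\^o isometry, splitting into a near-diagonal piece (bounded via uniform boundedness of $G$) and an off-diagonal piece (bounded via Lemma~\ref{06/06/18(1)} on each subinterval), then Gaussianity plus Kolmogorov. The one noteworthy difference is your treatment of the ``straddling'' set where $v$ and $v+(t-s)$ lie in different subintervals: you use the crude bound $\|G(v+(t-s))-G(v)\|\le 2M_T$ together with the measure estimate $\le C_T(t-s)$, whereas the paper attempts to squeeze a H\"older-type estimate out of Lemma~\ref{06/06/18(1)} even across the break points --- your argument is both simpler and more transparently rigorous, and gives the same order $(t-s)$ contribution.
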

\begin{proof} It suffices to show this theorem for any $0\le s<t\le T$ with $t-s<r.$ To this end, it is easy to have that
\begin{equation}
\label{15/06/14(1)}
\begin{split}
{\mathbb E}&\|W_G(t) - W_G(s)\|^2_H\\
 &= \sum^\infty_{j=1}\lambda_j\int^t_s \|G(t-u)Be_j\|^2_H du + \sum^\infty_{j=1} \lambda_j\int^s_0 \|(G(t-u)-G(s-u))Be_j\|^2_H du\\
&=: I_1 + I_2.
\end{split}
\end{equation}
Since $G(\cdot)$ is strongly continuous on ${\mathbb R}$, it is easy to see, by the well-known Principle of Uniform Boundedness, that $G(\cdot)$ is norm bounded on $[0, T]$, and there exists a real number $C(T)>0$ such that
\begin{equation}
\label{15/06/14(2)}
\begin{split}
I_1 &= \sum^\infty_{j=1} \lambda_j \int^t_s \|G(t-u)Be_j\|^2_H du\\
&\le C(T)\|B\|^2\sum^\infty_{j=1}\lambda_j (t-s) = C(T)\|B\|^2Tr(Q)(t-s).
\end{split}
\end{equation}
 On the other hand, suppose that $s\in (Nr, (N+1)r)$ for some $N\in {\mathbb N}$ and $t-s<r$. Then,  for the item $I_2$ we have
\[
\begin{split}
I_2 &= \sum^\infty_{j=1} \lambda_j\int^s_0 \|[G(t-u)-G(s-u)]Be_j\|^2_H du\\
&\le \|B\|^2Tr(Q) \sum^{N}_{k=0}\int^{(k+1)r}_{kr}\|G(t-s+u) -G(u)\|^2 du\\
&\le \|B\|^2Tr(Q) \sum^N_{k=0}\int_{kr}^{(k+1)r-(t-s)}\|G(t-s+u)-G(u)\|^2du\\
&\,\,\,\,\,\, + \|B\|^2Tr(Q) \sum^N_{k=0}\int^{(k+1)r}_{(k+1)r-(t-s)}\|G(t-s+u)-G(u)\|^2du.
\end{split}
\]
By using Lemma  \ref{06/06/18(1)} for those values
\[
\beta< \begin{cases}
1/2\hskip 20pt  \hbox{if}\hskip 15pt  \rho>1/2,\\
\rho\hskip 31pt \hbox{if}\hskip 15pt \rho\le 1/2,
\end{cases}
\]
  one can further obtain
\begin{equation}
\label{15/06/14(3)}
\begin{split}
I_2 & \le \|B\|^2Tr(Q)\sum^N_{k=0}\int_{kr}^{(k+1)r-(t-s)}C_{k, \beta}(t-s)^{2\beta}(u-kr)^{-2\beta}du\\
&\,\,\,\,\,\, +
\|B\|^2Tr(Q)\sum^N_{k=0}\int^{(k+1)r}_{(k+1)r-(t-s)}C_{k, \beta}(t-s)^{2\beta}\Big[u - \big((k+1)r - (t-s)\big)\Big]^{-2\beta}du\\
&\le \|B\|^2Tr(Q)\frac{(t-s)^{2\beta}}{1-2\beta}\Big[ \sum^N_{k=1}C_{k, \beta}(r-(t-s))^{1-2\beta} + \sum^N_{k=1} C'_{k, \beta}(t-s)^{1-2\beta}\Big]\\
&\le \|B\|^2Tr(Q)\Big[\frac{(t-s)^{2\beta}r^{1-2\beta}}{1-2\beta}\sum^N_{k=1}C_{k, \beta} + \frac{t-s}{1-2\beta}\sum^N_{k=1}C'_{k, \beta}\Big]\\
&\le C(T, \beta)\big[(t-s)^{2\beta} + (t-s)\big]\hskip 15pt \hbox{for some}\hskip 15pt C(T, \beta)>0,
\end{split}
\end{equation}
where $C_{k, \beta}>0$, $C'_{k, \beta}>0$ are two numbers depending on $k$ and $\beta$.
Thus, by substituting  (\ref{15/06/14(2)}) and   (\ref{15/06/14(3)}) into (\ref{15/06/14(1)}),   we  immediately obtain
\begin{equation}
\label{20/09/18(1)}
\begin{split}
{\mathbb E}\|W_G(t) - W_G(s)\|^2_X &\le [C(T)\|B\|^2Tr(Q) + C(T, \beta)](t-s) +  C(T, \beta)(t-s)^{2\beta}\\
&\le M(T, \beta)(t-s)^{2\beta}
\end{split}
\end{equation}
where
\[
M(T, \beta) = [C(T)\|B\|^2Tr(Q) +C(T, \beta)]r^{1-2\beta} +C(T, \beta)>0.\]
 From (\ref{20/09/18(1)}), it follows further that for any integer $m\ge 1$,
\[
{\mathbb E}\|W_G(t)-W_G(s)\|^{2m}\le C_m(t-s)^{2m\beta},\hskip 15pt \forall\,0< s<t< T,\,\,t-s<r.\]
So, by the well-known Kolmogorov test, $W_G$ is $\alpha_m$-H\"older continuous with
\[
\alpha_m = \frac{2m\beta-1}{2m}.\]
Since $m$ is arbitrary, the trajectories of $W_G$ are in $C^\beta([0, T]; H)$. The proof is  complete.
\end{proof}

\begin{theorem} Suppose that condition (H) holds. Let $T>0$ and $Tr(Q)<\infty$. For $\nu\le \gamma<1$ and $0<\beta< \displaystyle\frac{1}{2} -\gamma$, the trajectories of $W_G$ are in $C^{\beta}([0, T], {\mathscr D}((-A)^\gamma))$.
\end{theorem}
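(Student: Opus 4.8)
The plan is to run the argument of the preceding theorem verbatim, but with the operator $(-A)^\gamma$ inserted throughout and with the plain continuity of $G$ replaced by the sharper estimates of Theorem \ref{21/05/18(1)}. As there, it suffices to bound the second moment of the increment for $0\le s<t\le T$ with $t-s<r$: since $W_G(t)=\int_0^t G(t-u)B\,dW(u)$ is a centred Gaussian process, so is $(-A)^\gamma(W_G(t)-W_G(s))$, and an estimate of the form $\mathbb{E}\,\|(-A)^\gamma(W_G(t)-W_G(s))\|_H^2\le M(T,\beta,\gamma)(t-s)^{2\beta}$ upgrades to all even moments and then, via the Kolmogorov test exactly as in the previous proof, to trajectories lying in $C^{\beta'}$ for every $\beta'<\beta$. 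Controlling $\|(-A)^\gamma(W_G(t)-W_G(s))\|_H$ together with the $H$-norm bound already furnished by the previous theorem gives control in the graph norm of $\mathscr{D}((-A)^\gamma)$. First I would justify moving the closed operator inside the integral, $(-A)^\gamma W_G(t)=\int_0^t(-A)^\gamma G(t-u)B\,dW(u)$, which is legitimate as soon as $u\mapsto(-A)^\gamma G(t-u)B$ belongs to $\mathcal{U}^2$; this integrability is precisely what (\ref{4567}) supplies near the singularity, where $\|(-A)^\gamma G(v)\|^2$ behaves like $v^{-2\gamma}$ and the hypotheses force $2\gamma<1$.

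I would then split, exactly as in (\ref{15/06/14(1)}),
\[
\begin{split}
\mathbb{E}\,\|(-A)^\gamma(W_G(t)-W_G(s))\|_H^2
&= \sum_{j=1}^\infty \lambda_j\int_s^t \|(-A)^\gamma G(t-u)Be_j\|_H^2\,du\\
&\quad{}+ \sum_{j=1}^\infty \lambda_j\int_0^s \|(-A)^\gamma\big(G(t-u)-G(s-u)\big)Be_j\|_H^2\,du
=: I_1+I_2.
\end{split}
\]
For $I_1$, the substitution $v=t-u$ makes $v$ range over $(0,t-s)\subset(0,r)$, so (\ref{4567}) with $n=0$ gives $\|(-A)^\gamma G(v)\|\le C_{0,\gamma}v^{-\gamma}$ and hence $I_1\le \|B\|^2\,\mathrm{Tr}(Q)\,C_{0,\gamma}^2\int_0^{t-s}v^{-2\gamma}\,dv = C\,(t-s)^{1-2\gamma}$. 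Since $2\beta<1-2\gamma$ and $t-s<r$, this is dominated by a constant times $(t-s)^{2\beta}$.

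The heart of the matter is $I_2$. With $w=s-u$ and $s\in(Nr,(N+1)r)$ I would write the integral as $\int_0^s\|(-A)^\gamma(G(t-s+w)-G(w))\|^2\,dw$, decompose it into the finite sum over $k=0,\dots,N$ of integrals over $[kr,(k+1)r]$, and split each such interval into $[kr,(k+1)r-(t-s)]$ and $[(k+1)r-(t-s),(k+1)r]$, precisely as in the previous proof. On the interior piece both $w$ and $t-s+w$ lie in the same subinterval, so (\ref{14/06/18(80)}) bounds the integrand by $C_{k,\beta,\gamma}(t-s)^{2\beta}(w-kr)^{-2(\beta+\gamma)}$; integrating, convergence at the lower endpoint $w=kr$ demands exactly $2(\beta+\gamma)<1$, that is $\beta<\tfrac12-\gamma$, which is the standing hypothesis and is the single genuinely restrictive point of the statement. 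The boundary piece, on which $t-s+w$ has crossed into the next subinterval, is handled as in the earlier proof by estimating the straddling difference through the Hölder behaviour of $(-A)^\gamma G$ measured from the endpoint $(k+1)r$; it contributes a term of order $(t-s)$. Summing the finitely many $k$ (note $N\le T/r$) yields $I_2\le C(T,\beta,\gamma)\big[(t-s)^{2\beta}+(t-s)\big]$.

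Combining the two bounds gives $\mathbb{E}\,\|(-A)^\gamma(W_G(t)-W_G(s))\|_H^2\le M(T,\beta,\gamma)(t-s)^{2\beta}$ for $t-s<r$, and the Gaussian–Kolmogorov argument then places the trajectories of $W_G$ in $C^{\beta}([0,T];\mathscr{D}((-A)^\gamma))$. The main obstacle, as indicated, is the boundary piece of $I_2$ together with keeping the exponent $1-2(\beta+\gamma)$ strictly positive; once (\ref{14/06/18(80)}) and (\ref{4567}) are available this reduces to bookkeeping, so the substantive work has already been discharged in Theorem \ref{21/05/18(1)}.
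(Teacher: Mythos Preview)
Your proposal is correct and follows essentially the paper's own argument: the same $I_1+I_2$ decomposition, the same appeal to (\ref{4567}) for $I_1$ and to (\ref{14/06/18(80)}) for $I_2$ over the subintervals $[jr,(j+1)r]$, and the same Gaussian--Kolmogorov finish. The paper in fact does \emph{not} perform the interior/boundary split you borrow from the preceding theorem and simply applies (\ref{14/06/18(80)}) over each full subinterval; your extra care is harmless, but note that (\ref{14/06/18(80)}) measures H\"older regularity only from the \emph{left} endpoint $kr$, so on the boundary strip where $t-s+w$ has crossed $(k+1)r$ the cleanest disposal is the crude triangle inequality together with the pointwise bound (\ref{4567}) on each term separately, which gives a contribution of order $(t-s)^{1-2\gamma}\le C(t-s)^{2\beta}$ rather than the $(t-s)$ you state.
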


\begin{proof}
Once again, we intend to use the Kolmogorov test. Let $0\le s< t\le T$ with $s,\,t\in (Nr, (N+1)r)$ for some $N>0$. Then, by definition, it follows easily that
\begin{equation}
\label{06/06/18(10)}
\begin{split}
&{\mathbb E}\|(-A)^\gamma W_G(t) - (-A)^\gamma W_G(s)\|^2_H\\ &= \sum^\infty_{k=1} \lambda_k \int^t_s \|(-A)^\gamma G(t-u)e_k\|^2_Hdu + \sum^\infty_{k=1} \lambda_k \int^s_0 \|(-A)^\gamma [G(t-u)-G(s-u)]e_k\|^2_Hdu\\
&=: I_1 + I_2.
\end{split}
\end{equation}
Now we estimate $I_1$ and $I_2$ separately. First, by using Theorem \ref{21/05/18(1)}
we have for $s,\,t\in (Nr, (N+1)r)$ with $s<t$ that
\begin{equation}
\label{06/06/18(11)}
\begin{split}
I_1 &\le Tr(Q) \int^{t-s}_0 \|(-A)^\gamma G(v)\|^2_Hdv\\
&\le Tr(Q) \int^{t-s}_0 \frac{C^2_{0, \gamma}}{v^{2\gamma}}dv\\
& = \frac{Tr(Q)C^2_{0, \gamma}}{1-2\gamma}(t-s)^{1-2\gamma},\hskip 15pt C_{0, \gamma}>0.
\end{split}
\end{equation}
Since $0< \beta <\displaystyle\frac{1}{2}-\gamma<1-\gamma$, it follows that $1-2\beta-2\gamma>0$, and we thus employ Theorem \ref{21/05/18(1)} to obtain
\begin{equation}
\label{06/06/18(12)}
\begin{split}
I_2 &= \sum_{k=1}^\infty \lambda_k \int^s_0 \|(-A)^\gamma \big[ G(t-s+v) - G(v)\big]e_k\|^2_H dv\\
&= \sum_{k=1}^\infty\lambda_k \sum^{N-1}_{j=0} \int^{(j+1)r}_{jr} \|(-A)^\gamma [G(t-s+v) - G(v)]e_k\|^2_Hdv\\
&\,\,\,\,\,\, +  \sum_{k=1}^\infty\lambda_k\int^{s}_{Nr} \|(-A)^\gamma[G(t-s+v) - G(v)]e_k\|^2_Hdv\\
&\le  Tr(Q)
\Big(\sum_{j=0}^{N-1} \int^{(j+1)r}_{jr} \frac{C^2_{j, \beta, \gamma}(t-s)^{2\beta}}{(v-jr)^{2\beta+2\gamma}}dv +
\int^{s}_{Nr} \frac{C^2_{N, \beta, \gamma}(t-s)^{2\beta}}{(v-Nr)^{2\beta+2\gamma}} dv\Big)\\
&\le  \frac{Tr(Q)}{1-2\beta-2\gamma}\cdot r^{1-2\beta-2\gamma}\cdot \Big(\sum^{[T]+1}_{j=0} C^2_{j, \beta, \gamma}\Big)(t-s)^{2\beta}, \hskip 15pt C_{j, \beta, \gamma}>0,
\end{split}
\end{equation}
where $[T]$ denotes the biggest integer less than or equal to $T$. Hence, by substituting (\ref{06/06/18(11)}),   (\ref{06/06/18(12)}) into  (\ref{06/06/18(10)}) and using the Kolmogorov test, we then obtain the desired result.
\end{proof}

\section{Proof of Theorem \ref{21/05/18(1)} (a)}

We  begin with establishing some useful lemmas.

\begin{lemma}
\label{15/05/2018(1)}
 Let $\gamma\in (0, 1)$.
For any $0<s<t<\infty$, there exists a constant $M_\gamma>0$ such that
\[
\|(-A)^\gamma  e^{tA} - (-A)^\gamma e^{sA}\|\le M_\gamma\Big(\frac{1}{s^\gamma} -\frac{1}{t^\gamma}\Big),\]
\[
\|(-A)^\gamma A e^{tA} - (-A)^\gamma Ae^{sA}\|\le M_\gamma\Big(\frac{1}{s^{\gamma+1}} -\frac{1}{t^{\gamma+1}}\Big).\]
\end{lemma}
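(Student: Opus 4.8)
The plan is to write each difference as the Bochner integral of a derivative and then estimate the integrand pointwise. For fixed $0<s<t<\infty$ the operator-valued map $\sigma\mapsto (-A)^\gamma e^{\sigma A}$ is continuously differentiable in the norm of $\mathscr{L}(X)$ on $[s,t]$, since analyticity of the semigroup places $e^{\sigma A}X$ in $\mathscr{D}((-A)^\delta)$ for every $\delta\ge 0$ and every $\sigma>0$, and $(-A)^\gamma$ commutes with $e^{\sigma A}$ on these ranges. Using $\frac{d}{d\sigma}e^{\sigma A}=Ae^{\sigma A}$ and the fundamental theorem of calculus in $\mathscr{L}(X)$, I would write
\[
(-A)^\gamma e^{tA} - (-A)^\gamma e^{sA} = \int_s^t (-A)^\gamma A e^{\sigma A}\,d\sigma = -\int_s^t (-A)^{\gamma+1} e^{\sigma A}\,d\sigma,
\]
where the last equality uses $(-A)^\gamma A = -(-A)^{\gamma+1}$. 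Passing the norm under the integral, it then remains only to bound $\|(-A)^{\gamma+1}e^{\sigma A}\|$.

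The key quantitative step is that this bound is not directly supplied by (\ref{10/06/18(2)}), whose exponent lies in $(0,1)$, whereas here $\gamma+1>1$. I would recover it from the semigroup law $e^{\sigma A}=e^{(\sigma/2)A}e^{(\sigma/2)A}$ and the commutation of fractional powers, splitting
\[
(-A)^{\gamma+1} e^{\sigma A} = \bigl[(-A)^\gamma e^{(\sigma/2)A}\bigr]\bigl[(-A) e^{(\sigma/2)A}\bigr].
\]
Estimating the first factor by (\ref{10/06/18(2)}) and the second by (\ref{10/06/18(1)}) gives $\|(-A)^{\gamma+1}e^{\sigma A}\|\le 2^{\gamma+1}MM_\gamma\,\sigma^{-(\gamma+1)}=:\widetilde M_\gamma\,\sigma^{-(\gamma+1)}$. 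Inserting this into the integral and computing $\int_s^t \sigma^{-(\gamma+1)}\,d\sigma = \gamma^{-1}\bigl(s^{-\gamma}-t^{-\gamma}\bigr)$ yields the first inequality with constant $\widetilde M_\gamma/\gamma$.

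For the second inequality I would proceed identically, now differentiating $\sigma\mapsto (-A)^\gamma A e^{\sigma A}$, whose derivative is $(-A)^\gamma A^2 e^{\sigma A} = (-A)^{\gamma+2}e^{\sigma A}$ (using $A^2=(-A)^2$). A threefold splitting $e^{\sigma A}=e^{(\sigma/3)A}e^{(\sigma/3)A}e^{(\sigma/3)A}$, with one factor estimated by (\ref{10/06/18(2)}) and the two others by (\ref{10/06/18(1)}), produces $\|(-A)^{\gamma+2}e^{\sigma A}\|\le C\,\sigma^{-(\gamma+2)}$; integrating and using $\int_s^t \sigma^{-(\gamma+2)}\,d\sigma=(\gamma+1)^{-1}\bigl(s^{-(\gamma+1)}-t^{-(\gamma+1)}\bigr)$ gives the stated bound after renaming the constant.

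The only genuine subtleties are, first, justifying the operator-norm fundamental theorem of calculus and the interchange $\frac{d}{d\sigma}(-A)^\gamma e^{\sigma A}=(-A)^\gamma A e^{\sigma A}$, which rests on the smoothing property of the analytic semigroup together with the closedness and commutation of $(-A)^\gamma$, all standard; and second, promoting the power estimate (\ref{10/06/18(2)}), valid only for exponents in $(0,1)$, to the exponents $\gamma+1$ and $\gamma+2$. I expect this second point to be the main obstacle, and the semigroup-splitting device above is precisely what resolves it.
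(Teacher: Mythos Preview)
Your proof is correct and follows essentially the same route as the paper: write each difference as $\int_s^t (-A)^\gamma A^k e^{uA}\,du$ for $k=1,2$, bound the integrand by $C/u^{\gamma+k}$, and integrate. The only difference is that the paper simply cites the estimate $\|(-A)^\gamma A^k e^{tA}\|\le M_{\gamma,k}/t^{\gamma+k}$ as well-known (see, e.g., Pazy), whereas you derive it explicitly via the semigroup-splitting device $e^{\sigma A}=\bigl(e^{(\sigma/m)A}\bigr)^m$; this is a standard way to obtain that estimate and your concern about promoting (\ref{10/06/18(2)}) to exponents larger than $1$ is well handled.
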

\begin{proof}
It is well-known that for any $\gamma\in (0, 1)$ and $k=1,\,2$, there exists a constant $M_{\gamma, k}>0$ such that
\begin{equation}
\label{27/05/18(11)}
\|(-A)^{\gamma}A^k e^{tA}\|\le M_{\gamma, k} /t^{k+\gamma},\hskip 15pt t>0.
\end{equation}
 Therefore, by using this estimate and the following equalities
\begin{equation}
\begin{split}
(-A)^\gamma e^{tA} - (-A)^\gamma e^{sA} &= \int^t_s (-A)^\gamma Ae^{uA}du,\\
(-A)^\gamma Ae^{tA} - (-A)^\gamma Ae^{sA} &= \int^t_s (-A)^\gamma A^2e^{uA}du,
\end{split}
\end{equation}
one can easily obtain   the desired results. The proof is complete now.
\end{proof}

\begin{corollary}
\label{14/06/18(1)}
Let $\gamma\in (0, 1)$ and $\beta>0$. For any $0<s<t<\infty$, there exists a constant $C_{\beta, \gamma}>0$ such that
\begin{equation}
\label{15/05/2018(2)}
\|(-A)^\gamma e^{tA} - (-A)^\gamma e^{sA}\|\le C_{\beta, \gamma} (t - s)^{\beta}\cdot s^{-\beta-\gamma}.
\end{equation}
\end{corollary}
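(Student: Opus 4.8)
The plan is to obtain the claimed H\"older bound by interpolating between two elementary estimates for the single difference
\[
D := (-A)^\gamma e^{tA} - (-A)^\gamma e^{sA}:
\]
a \emph{Lipschitz-in-time} bound carrying a full factor $(t-s)$, and a \emph{uniform} bound carrying no factor of $(t-s)$ at all. Geometric interpolation between the two will then manufacture the fractional power $(t-s)^\beta$ together with exactly the compensating power of $s$ demanded by (\ref{15/05/2018(2)}).

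First I would extract the Lipschitz bound from Lemma \ref{15/05/2018(1)}, which already gives $\|D\|\le M_\gamma(s^{-\gamma}-t^{-\gamma})$. Writing the right-hand side as $s^{-\gamma}-t^{-\gamma}=\gamma\int_s^t u^{-\gamma-1}\,du$ and using that $u^{-\gamma-1}$ is decreasing, hence $\le s^{-\gamma-1}$ on $[s,t]$, yields
\[
\|D\|\le M_\gamma\,\gamma\,(t-s)\,s^{-\gamma-1}.
\]
Next I would record the uniform bound straight from the semigroup estimate (\ref{10/06/18(2)}): by the triangle inequality and $t>s$,
\[
\|D\|\le \|(-A)^\gamma e^{tA}\|+\|(-A)^\gamma e^{sA}\|\le M_\gamma\big(t^{-\gamma}+s^{-\gamma}\big)\le 2M_\gamma\,s^{-\gamma}.
\]

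With both bounds in hand, the interpolation is the core of the argument. For $\beta\in(0,1]$ I would factor $\|D\|=\|D\|^{1-\beta}\,\|D\|^{\beta}$, estimate the first factor by the uniform bound and the second by the Lipschitz bound, and multiply:
\[
\|D\|\le\big(2M_\gamma\,s^{-\gamma}\big)^{1-\beta}\big(M_\gamma\,\gamma\,(t-s)\,s^{-\gamma-1}\big)^{\beta}.
\]
Collecting the powers of $s$ gives $s^{-\gamma(1-\beta)}\cdot s^{(-\gamma-1)\beta}=s^{-\gamma-\beta}$, while the single factor $(t-s)$ contributes $(t-s)^{\beta}$; taking $C_{\beta,\gamma}=(2M_\gamma)^{1-\beta}(M_\gamma\gamma)^{\beta}$ reproduces (\ref{15/05/2018(2)}) verbatim.

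The argument carries no genuine obstacle; the only points needing care are (i) checking that the two exponents of $s$ sum precisely to $-\gamma-\beta$, which is exactly what forces the particular pairing of the two bounds, and (ii) noting that the splitting $\|D\|=\|D\|^{1-\beta}\|D\|^{\beta}$ is only admissible for $\beta\in(0,1]$, so that the corollary is genuinely being applied in the range relevant to Theorem \ref{21/05/18(1)}(b), where $0<\beta<1-\gamma<1$.
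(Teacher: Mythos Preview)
Your proof is correct and takes a genuinely different route from the paper's. The paper does not interpolate; instead it bounds the quantity $s^{-\gamma}-t^{-\gamma}$ from Lemma~\ref{15/05/2018(1)} by pure algebra, splitting into the cases $0<\beta\le\gamma$ and $\beta>\gamma$. In the first case it uses the concavity inequality $a^{\delta}-b^{\delta}\le(a-b)^{\delta}$ (valid for $0<\delta\le1$) with $\delta=\beta/\gamma$ applied to $t^{\gamma}-s^{\gamma}$; in the second it rewrites the difference as $(t^{\beta}-s^{\beta})/s^{\beta+\gamma}$ and invokes the same concavity with $\delta=\beta$. Your geometric interpolation between the Lipschitz bound $\|D\|\le M_{\gamma}\gamma(t-s)s^{-\gamma-1}$ and the uniform bound $\|D\|\le 2M_{\gamma}s^{-\gamma}$ is arguably cleaner: it avoids the case distinction entirely and makes transparent why the exponent of $s$ comes out exactly as $-\beta-\gamma$. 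The paper's approach, on the other hand, never returns to the semigroup after invoking Lemma~\ref{15/05/2018(1)} and works purely with real-variable inequalities. Both arguments are valid precisely for $\beta\in(0,1]$; you are right to flag this restriction, and indeed only the range $0<\beta<1-\gamma$ is used in Theorem~\ref{21/05/18(1)} and its applications.
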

\begin{proof}
First note that for any real numbers $a\ge b\ge 0$ and $0<\delta\le 1$, we have the following inequality
\begin{equation}
\label{16/5/18(1)}
a^\delta - b^\delta\le (a - b)^\delta.
\end{equation}
For $0<\beta\le \gamma$ and any $0<s<t<\infty$, we have by using (\ref{16/5/18(1)}) that
\[
\begin{split}
\frac{1}{s^\gamma} - \frac{1}{t^\gamma} &= \frac{1}{s^\gamma}\frac{t^\gamma - s^\gamma}{t^\gamma}\le  \frac{1}{s^\gamma}\Big(\frac{t^\gamma - s^\gamma}{t^\gamma}\Big)^{\beta/\gamma}\le (t - s)^{\beta}\cdot s^{-\beta-\gamma}.
\end{split}
\]
On the other hand, for $\beta> \gamma$ and $0<s<t<\infty$, we have
\begin{equation}
\label{13/05/19(1)}
\begin{split}
\frac{1}{s^\gamma} - \frac{1}{t^\gamma} =\frac{t^\beta -s^\gamma\cdot t^{\beta-\gamma}}{s^\gamma\cdot t^\beta}\le \frac{t^\beta-s^\gamma\cdot s^{\beta-\gamma}}{s^\gamma\cdot s^\beta} = (t - s)^{\beta}\cdot s^{-\beta-\gamma}.
\end{split}
\end{equation}
Now we have by virtue of Lemma \ref{15/05/2018(1)} the desired inequality.
\end{proof}

 To proceed further, let us denote by $|\cdot|_\infty$ the essential least upper bound norm in $L^\infty([-r, 0], {\mathbb R})$, i.e.,
\begin{equation}
\label{12/06/18(1)}
|a|_\infty :=\hbox{ess}\!\!\sup_{\theta\in [-r, 0]}|a(\theta)|\hskip 15pt \hbox{for any}\hskip 15pt a\in L^\infty([-r, 0], {\mathbb R}).
\end{equation}
For $\gamma \in (0, 1)$, we define
\[
\Gamma(t) = \int^t_0 (-A)^\gamma e^{(t-s)A}a(-s)ds, \hskip 20pt t\in [0, r].\]
\begin{proposition}
\label{31/05/18(1)}
The mapping $\Gamma(\cdot): [0, r]\to {\mathscr L}(X)$, the space of all bounded linear operators on $X$, is uniformly bounded and for any $0<\beta<1-\gamma$, there exists a number $C_{\beta, \gamma}>0$ such that
\begin{equation}
\label{10/06/18(5)}
\|\Gamma(t) -\Gamma(s)\|\le C_{\beta, \gamma} (t-s)^{\beta},\hskip 15pt 0\le s<t\le r,
\end{equation}
i.e., $\Gamma$ is H\"older continuous on $[0, r]$ with order $\beta\in (0, 1-\gamma)$.
\end{proposition}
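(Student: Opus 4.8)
The plan is to estimate $\Gamma$ directly from its definition, using the smoothing bound (\ref{10/06/18(2)}) for the analytic semigroup together with the hypothesis $a\in L^\infty([-r,0],\mathbb{R})$ furnished by condition (H). For the uniform boundedness I would simply write $\|\Gamma(t)\|\le |a|_\infty\int^t_0\|(-A)^\gamma e^{(t-u)A}\|\,du\le |a|_\infty M_\gamma\int^t_0 (t-u)^{-\gamma}\,du=|a|_\infty M_\gamma\,t^{1-\gamma}/(1-\gamma)$. Since $\gamma<1$ the integral converges, and the right-hand side is bounded by $|a|_\infty M_\gamma\,r^{1-\gamma}/(1-\gamma)$ uniformly for $t\in[0,r]$, which settles the first assertion.

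For the H\"older estimate, fix $0\le s<t\le r$ and split the integral defining $\Gamma(t)$ as $\int^t_0=\int^s_0+\int^t_s$. Regrouping against $\Gamma(s)$ produces the decomposition $\Gamma(t)-\Gamma(s)=J_1+J_2$, where
\[
J_1=\int^s_0\big[(-A)^\gamma e^{(t-u)A}-(-A)^\gamma e^{(s-u)A}\big]a(-u)\,du,\qquad
J_2=\int^t_s (-A)^\gamma e^{(t-u)A}a(-u)\,du.
\]
The term $J_2$ is the near-diagonal piece and is handled exactly as in the uniform bound: $\|J_2\|\le |a|_\infty M_\gamma\int^t_s (t-u)^{-\gamma}\,du=|a|_\infty M_\gamma (t-s)^{1-\gamma}/(1-\gamma)$. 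Because $t-s\le r$ and $1-\gamma>\beta$, I would rewrite $(t-s)^{1-\gamma}=(t-s)^\beta (t-s)^{1-\gamma-\beta}\le r^{1-\gamma-\beta}(t-s)^\beta$, giving $\|J_2\|\le C(t-s)^\beta$.

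The term $J_1$ is where the real work lies and is the main obstacle. Here I would apply Corollary \ref{14/06/18(1)} with the two arguments $t-u>s-u$, obtaining $\|(-A)^\gamma e^{(t-u)A}-(-A)^\gamma e^{(s-u)A}\|\le C_{\beta,\gamma}(t-s)^\beta (s-u)^{-\beta-\gamma}$, which pulls the desired factor $(t-s)^\beta$ outside the integral. The residual integral is $\int^s_0 (s-u)^{-\beta-\gamma}\,du$, and its convergence at the endpoint $u=s$ is precisely the delicate point: the integrand carries a singularity of order $\beta+\gamma$, so it is integrable if and only if $\beta+\gamma<1$, i.e. $\beta<1-\gamma$, which is exactly the standing hypothesis. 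Under this condition $\int^s_0 (s-u)^{-\beta-\gamma}\,du=s^{1-\beta-\gamma}/(1-\beta-\gamma)\le r^{1-\beta-\gamma}/(1-\beta-\gamma)$, whence $\|J_1\|\le C(t-s)^\beta$. Combining the bounds for $J_1$ and $J_2$ yields (\ref{10/06/18(5)}) with a constant depending only on $\beta$, $\gamma$, $r$, $|a|_\infty$ and $M_\gamma$, completing the argument.
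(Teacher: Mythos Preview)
Your proof is correct and follows essentially the same approach as the paper: the same direct bound for uniform boundedness, the same splitting of $\Gamma(t)-\Gamma(s)$ into the near-diagonal piece $\int^t_s$ and the remainder $\int^s_0$, with the former estimated via (\ref{10/06/18(2)}) and the latter via Corollary~\ref{14/06/18(1)} and the integrability condition $\beta+\gamma<1$. The only cosmetic difference is that the paper leaves the $I_1$ bound as $(t-s)^{1-\gamma}$ and absorbs the extra power at the end, whereas you spell out $(t-s)^{1-\gamma}\le r^{1-\gamma-\beta}(t-s)^\beta$ explicitly.
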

\begin{proof}
First, it is easy to see by virtue of (\ref{10/06/18(2)}) that
\[
\begin{split}
\|\Gamma(t)\| \le |a|_\infty \int^t_0 \|(-A)^\gamma e^{(t-s)A}\|ds\le |a|_\infty \int^t_0 \frac{M_\gamma}{(t-s)^\gamma}ds= \frac{|a|_\infty M_\gamma}{1-\gamma}\cdot t^{1-\gamma},
\end{split}
\]
which immediately implies
\[
\|\Gamma\|_\infty := \sup_{0\le t\le r}\|\Gamma(t)\| \le \frac{|a|_\infty M_\gamma r^{1-\gamma}}{1-\gamma}<\infty.\]
Thus, $\Gamma$ is uniformly bounded in $[0, r]$.
To show the relation (\ref{10/06/18(5)}), we have for $0\le s<t\le r$ that
\begin{equation}
\label{27/05/18(50)}
\begin{split}
&\|\Gamma(t)-\Gamma(s)\|\\
& =  \Big\|\int^t_0 (-A)^\gamma e^{(t-u)A}a(-u)du - \int^s_0 (-A)^\gamma e^{(s-u)A}a(-u)du\Big\|\\
&\le \Big\|\int^t_s (-A)^\gamma e^{(t-u)A}a(-u)du\Big\| + \Big\|\int^s_0 \Big[(-A)^\gamma e^{(t-u)A}a(-u) - (-A)^\gamma e^{(s-u)A}a(-u)\Big]du\Big\|\\
&=: I_1 + I_2.
\end{split}
\end{equation}
By using the relation (\ref{10/06/18(2)}), we easily obtain the inequality
\begin{equation}
\label{27/05/18(51)}
\begin{split}
I_1 &\le |a|_\infty \int^t_s \|(-A)^\gamma e^{(t-u)A}\|du\\
 &\le |a|_\infty \int^t_s \frac{M_\gamma}{(t-u)^\gamma}du= \frac{|a|_\infty M_\gamma}{1-\gamma}(t-s)^{1-\gamma}.
\end{split}
\end{equation}
On the other hand, we can apply Corollary \ref{14/06/18(1)} to (\ref{27/05/18(50)}) to obtain
\begin{equation}
\label{27/05/18(52)}
\begin{split}
I_2 &\le |a|_\infty \int^s_0 \|(-A)^\gamma e^{(t-u)A} - (-A)^\gamma e^{(s-u)A}\|du\\
&\le |\alpha|_\infty M_\gamma (t-s)^{\beta}\int^s_0 (s-u)^{-\beta-\gamma}du\\
&\le \frac{|a|_\infty M_\gamma (t - s)^{\beta}}{1- \gamma -\beta}r^{1-\gamma -\beta}.
\end{split}
\end{equation}
Hence, substituting (\ref{27/05/18(51)}) and (\ref{27/05/18(52)}) into (\ref{27/05/18(50)}), we immediately obtain the desired result.
\end{proof}

To show Theorem \ref{21/05/18(1)} (a), we intend to develop an induction scheme. We first consider the case that $n=0$ and set
\[
V(t) = (-A)^\gamma(G(t)- e^{tA}),\hskip 20pt t\in [0, r].\]
Then, the integral equation to be satisfied by $V(t)$ is
\begin{equation}
\label{01/06/18(2)}
V(t) = V_0(t) + \int^t_0 \Gamma(t-s)A_2(-A)^{-\gamma} V(s)ds,\hskip 20pt t\in [0, r],
\end{equation}
where
\[
\begin{split}
V_0(t) &= \int^t_0 \Gamma(t-s)A_2e^{sA}ds\\
&= \int^t_0 (\Gamma(t-s)-\Gamma(t))A_2 e^{sA}ds + \Gamma(t) A_2A^{-1}(e^{tA}-I),\hskip 15pt t\in [0, r].
\end{split}
\]
Since ${\mathscr D}(A)\subset {\mathscr D}(A_i)$, we have $A_iA^{-1}\in {\mathscr L}(X)$, $i=1,\,2$. Then for any $0<\beta<1-\gamma$, it follows by virtue of Proposition \ref{31/05/18(1)} and (\ref{10/06/18(1)}) that
\[
\begin{split}
\|V_0(t)\| &\le \int^t_0 \|(\Gamma(t-s)-\Gamma(t))A_2A^{-1} Ae^{sA}\|ds + \|\Gamma(t)\|\|A_2A^{-1}\|(\|e^{tA}\| + 1)\\
&\le \int^t_0 C_{\beta,\gamma}M \|A_2 A^{-1}\|s^{\beta-1}ds + \|\Gamma\|_\infty \|A_2A^{-1}\|(M+1)\\
&\le \frac{C_{\beta,\gamma} M\|A_2 A^{-1}\|}{\beta}r^{\beta} + \|\Gamma\|_\infty \|A_2 A^{-1}\|(M+1),\hskip 20pt t\in [0, r].
\end{split}
\]
Hence, $V_0(\cdot)$ is uniformly bounded on $[0, r]$. Since ${\mathscr D}((-A)^\nu)\subset {\mathscr D}(A_2)$, we have
\begin{equation}
\label{10/05/19(1)}
\begin{split}
\|(-A)^\gamma e^{tA}A_2(-A)^{-\gamma}\| \le M_\gamma\cdot \|A_2(-A)^{-\nu}\|\cdot \|(-A)^{-(\gamma-\nu)}\|\cdot t^{-\gamma},\hskip 15pt t>0.
\end{split}
\end{equation}
Hence, by virtue of the well-known Gronwall lemma and (\ref{01/06/18(2)}) that
\[
v_0 := \sup_{0\le t\le r}\|V(t)\|<\infty.\]
Note that
\begin{equation}
\label{01/06/18(4)}
(-A)^\gamma G(t) = (-A)^\gamma e^{tA} + V(t),\hskip 20pt t\in [0, r].
\end{equation}
Hence, for $t\in (0, r]$ we have
\[
\begin{split}
\|(-A)^\gamma G(t)\| &\le \|(-A)^\gamma e^{tA}\| + \|V(t)\|\\
&\le M_\gamma/t^\gamma + v_0\\
& \le (M_\gamma + v_0 r^\gamma)/t^\gamma,\hskip 15pt M_\gamma>0,
\end{split}
\]
which is (\ref{4567}) with $n=0$. In a similar manner,
for any $0< s<t\le r$, we have from (\ref{10/06/18(2)}) and (\ref{01/06/18(4)}) that
\[
\begin{split}
\Big\|\int^t_s (-A)^\gamma G(u)du\Big\| &\le \int^t_s \|(-A)^\gamma e^{uA}\|du + \int^t_s \|V(u)\|du\\
&\le \int^t_s \frac{M_\gamma}{u^\gamma}du + v_0 (t-s)\le \frac{M_\gamma\cdot r^{1-\gamma}}{1-\gamma} + rv_0 <\infty,
\end{split}
\]
which is (\ref{4569}) with $n=0$.

Now suppose the fundamental solution $G(\cdot)$ satisfies all the estimates in Theorem \ref{21/05/18(1)} on the intervals $[0, r]$, $\cdots$, $[(n-1)r, nr]$. Then in the interval $[nr, (n+1)r]$, the integral equation to be satisfied by
\[
V(t) = (-A)^\gamma\Big(G(t) - \int^t_{nr} e^{(t-s)A}A_1G(s-r)ds\Big),\hskip 20pt t\in [nr, (n+1)r],\]
is
\[
V(t) = V_0(t) + \int^t_{nr} \Gamma(t-u)A_2(-A)^{-\gamma}V(u)du,\]
where
\begin{equation}
\label{07/06/18(10)}
\begin{split}
V_0(t) &= (-A)^\gamma e^{tA} + (-A)^\gamma \int^{nr}_r e^{(t-s)A}A_1G(s-r)ds + \int^{t-r}_0 e^{(t-r-u)A}\Gamma(r)A_2 G(u)du \\
&\,\,\,\,\, + \int^{nr}_{t-r}\Gamma(t-u)A_2G(u)du + \int^t_{nr} \Gamma(t-u)A_2\int^u_{nr}e^{(u-s)A}A_1G(s-r)dsdu\\
&=: I_1(t) +I_2(t) +I_3(t) +I_4(t)+I_5(t).
\end{split}
\end{equation}
Now we estimate each term on the right hand side of (\ref{07/06/18(10)}). First, note that
\[
\|I_1(t)\|\le \frac{M_\gamma}{t^\gamma}\le \frac{M_\gamma}{(nr)^\gamma},\hskip 20pt t\in (nr, (n+1)r],\]
and
\begin{equation}
\label{07/06/18(20)}
I_2(t) =   \sum^{n-1}_{j=1} (-A)^\gamma \int^{(j+1)r}_{jr} e^{(t-s)A}A_1 G(s-r)ds,\hskip 20pt t\in (nr, (n+1)r].
\end{equation}
By virtue of condition (H), for any $\mu\vee\gamma\le \tau<1$, ${\mathscr D}((-A)^\tau)\subset {\mathscr D}(A_i)$ and $A_i(-A)^{-\tau}\in {\mathscr L}(X)$, $i=1,\,2$.
 For $t\in (nr, (n+1)r]$, $j=0,\,1,\,\cdots,\, n-2$, and $\gamma\vee\mu +\delta<1$ with $\delta>0$ sufficiently small, it is easy to see, by the induction assumption and Corollary \ref{15/05/2018(1)}, that
\begin{equation}
\label{07/06/18(21)}
\begin{split}
 &\Big\|(-A)^\gamma \int^{(j+1)r}_{jr} e^{(t-s)A}A_1 G(s-r)ds\Big\|\\
&\le \int^{(j+1)r}_{jr}\Big\|(-A)^\gamma  e^{(t-s)A} - (-A)^\gamma  e^{(t-jr)A}\Big\|\cdot \|A_1(-A)^{-(\gamma\vee\mu +\delta)}\|\cdot \|(-A)^{\gamma\vee\mu +\delta} G(s-r)\|ds\\
&\,\,\,\,\,\, + \|(-A)^\gamma  e^{(t-jr)A}\|\cdot \|A_1(-A)^{-(\gamma\vee\mu +\delta)}\|\cdot \Big\|\int^{(j+1)r}_{jr} (-A)^{\gamma\vee\mu +\delta} G(s-r)ds\Big\|\\
&\le  C_{\gamma, \mu, \delta} \int^{(j+1)r}_{jr}\frac{(s-jr)^{\gamma}}{(t-s)^{2\gamma}}\cdot \frac{C'_{j}}{(s-jr)^{\gamma\vee\mu +\delta}}ds + M_{\gamma, \mu,\delta} C'_{j}\cdot \frac{1}{(t-jr)^{\gamma}}\\
  &\le  \frac{C_{\gamma, \mu, \delta} C'_{j}}{r^{2\gamma}}\int^{(j+1)r}_{jr}\frac{1}{(s-jr)^{\gamma\vee\mu +\delta-\gamma}}ds + \frac{M_{\gamma, \mu, \delta} C'_{j}}{(t-jr)^{\gamma}}\\
  &\le \frac{C_{\gamma, \mu, \delta} C'_{j}}{r^{2\gamma}}\cdot \frac{r^{1-(\gamma\vee\mu +\delta-\gamma)}}{1-(\gamma\vee\mu +\delta-\gamma)} +\frac{M_{\gamma, \mu, \delta}C'_{j}}{r^{\gamma}},\hskip 15pt C_{\gamma, \mu, \delta},\,C'_j,\,M_{\gamma, \mu,\delta}>0.
\end{split}
\end{equation}
On the other hand, for $j=n-1$ and $t\in (nr, (n+1)r]$, we have by using Corollary \ref{15/05/2018(1)}
and induction assumption  that for $0<\beta<1-\gamma$ and $\delta>0$ sufficiently small,
\begin{equation}
\label{07/06/18(12)}
\begin{split}
&\Big\| (-A)^\gamma \int^{nr}_{(n-1)r} e^{(t-s)A}A_1G(s-r)ds\Big\|\\
&\le \int^{nr}_{(n-1)r} \|(-A)^\gamma e^{(t-s)A} - (-A)^\gamma e^{(t-(n-1)r)A}\|\cdot\|A_1(-A)^{-(1-\delta)}\|\cdot \|(-A)^{1-\delta}G(s-r)\|ds\\
&\,\,\,\,+ \|(-A)^\gamma e^{(t-(n-1)r)A}\|\cdot \|A_1(-A)^{-(1-\delta)}\|\cdot \Big\|\int^{nr}_{(n-1)r} (-A)^{1-\delta}G(s-r)ds\Big\|\\
&\le C_{\beta, \gamma, \delta}\int^{nr}_{(n-1)r} \frac{(s-(n-1)r)^{\beta}}{(t-s)^{\beta+\gamma}}
\cdot\frac{C'_{n-1}}{(s-(n-1)r)^{1-\delta}}ds + \frac{M_{\delta,\gamma}}{{(t-(n-1)r)}^\gamma}C'_{n-1}\\
&\le C_{\beta, \gamma, \delta}\int^{nr}_{(n-1)r} \frac{(s-(n-1)r)^{\beta}}{(nr-s)^{\beta+\gamma}}
\cdot\frac{C'_{n-1}}{(s-(n-1)r)^{1-\delta}}ds + \frac{M_{\delta, \gamma}\cdot C'_{n-1}}{r^\gamma}\\
&= C_{\beta, \gamma, \delta}\cdot C'_{n-1}\cdot B(1-\beta-\gamma, \beta+\delta)+ \frac{M_{\delta, \gamma}\cdot C'_{n-1}}{r^\gamma},
\end{split}
\end{equation}
where  $C_{\beta, \gamma, \delta}, \,M_{\delta,\gamma},\, C'_{n-1}>0$ and $B(\cdot, \cdot)$ is the standard Beta function.
Combining (\ref{07/06/18(20)}), (\ref{07/06/18(21)}) and (\ref{07/06/18(12)}), we have for $t\in (nr, (n+1)r]$ that
\begin{equation}
\label{07/06/18(14)}
\begin{split}
\|I_2(t)\|\le &\,\,\sum^{n-2}_{j=1} \Big(\frac{C_{\gamma, \mu,\delta} C'_{j}r^{1-\gamma\vee\mu-\delta+\gamma}}{1-(\gamma\vee\mu+\delta-\gamma)}
+\frac{M_{\gamma, \mu, \delta} C'_{j}}{r^{\gamma}}\Big)\\
 &\,\, +C_{\beta, \gamma, \delta}\cdot C'_{n-1}\cdot B(1-\beta-\gamma, \beta+\delta)+ \frac{M_{\delta, \gamma}\cdot C'_{n-1}}{r^\gamma}.
\end{split}
\end{equation}

To estimate $I_3(t)$, we first note that
\begin{equation}
\label{01/06/18(1)}
\|e^{tA}- e^{sA}\|\le C_\alpha (t-s)^\alpha\cdot s^{-\alpha},\hskip 15pt C_\alpha>0,
\end{equation}
for any $0<s<t<\infty$ and $\alpha\in (0, 1]$. Indeed, by virtue of (\ref{10/06/18(1)}), we have for any $0<s<t$ that
\begin{equation}
\label{10/06/18(10)}
\begin{split}
\|e^{tA}- e^{sA}\|\le \int^t_s \|Ae^{uA}\|du &\le M\int^t_s \frac{1}{u}du\\
& = M\log (t/s)= M\log \Big(1 + \frac{t-s}{s}\Big).
\end{split}
\end{equation}
It is elementary to see that
\begin{equation}
\label{14/06/18(10)}
\log(1+a)\le a^\alpha/\alpha\hskip 15pt \hbox{for any}\hskip 15pt a>0,\,\,\, 0<\alpha\le 1.
\end{equation}
 Hence, (\ref{01/06/18(1)}) follows immediately from (\ref{10/06/18(10)}).
Now we re-write $I_3(t)$ as
\begin{equation}
\label{08/06/18(1)}
\begin{split}
\int^{t-r}_0& e^{(t-u-r)A}\Gamma(r) A_2 G(u)du\\
& = \sum^{n-2}_{j=0} \int^{(j+1)r}_{jr} e^{(t-u-r)A} \Gamma(r) A_2 G(u)du + \int^{t-r}_{(n-1)r} e^{(t-u-r)A} \Gamma(r) A_2 G(u)du.
\end{split}
\end{equation}
For $j=0,\,1,\,\cdots,\, n-2$, we have by the induction assumption and
 (\ref{01/06/18(1)}) that for sufficiently small $\delta>0$,
 \begin{equation}
\label{08/06/18(2)}
\begin{split}
&\Big\|\int^{(j+1)r}_{jr} e^{(t-u-r)A}\Gamma(r) A_2 G(u)du\Big\|\\
&\le \int^{(j+1)r}_{jr}\| e^{(t-u-r)A} -e^{(t-jr-r)A}\|\cdot \|\Gamma(r)\|\cdot \|A_2(-A)^{-(1-\delta)}\|\cdot \|(-A)^{1-\delta} G(u)\|du\\
&\,\,\,\,\, + \|e^{(t-jr-r)A}\|\cdot \|\Gamma(r)\|\cdot \|A_2(-A)^{-\nu}\|\cdot \Big\|\int^{(j+1)r}_{jr}(-A)^{\nu} G(u)du\Big\|\\
&\le \int^{t-r}_{jr} C_{\gamma,\delta}\cdot \frac{(u-jr)^\gamma}{(t-r-u)^\gamma}\cdot \frac{C'_j}{(u-jr)^{1-\delta}}du + M_{\nu}C'_j\\
&\le C_{\gamma, \delta}C'_j\cdot B(1-\gamma, \gamma + \delta) + M_{\nu}C'_j,\,\,\,\,\,C_{\gamma, \delta}>0,\,\,C'_j>0,\, M_\nu>0.
\end{split}
\end{equation}
 In a similar manner, we have for $t\in (nr, (n+1)r]$ and sufficiently small $\delta>0$ that
\begin{equation}
\label{08/06/18(3)}
\begin{split}
\Big\|\int^{t-r}_{(n-1)r} &e^{(t-u-r)A} \Gamma(r) A_2 G(u)du\Big\|\\
 &\le \int^{t-r}_{(n-1)r}\| e^{(t-u-r)A} -e^{(t-nr)A}\|\cdot \|\Gamma(r)\|\cdot\|A_2(-A)^{-(1-\delta)}\|\cdot \|(-A)^{1-\delta} G(u)\|du\\
&\,\,\,\, + \|e^{(t-nr)A}\|\cdot \|\Gamma(r)\|\cdot\|A_2(-A)^{-(1-\delta)}\|\cdot \Big\|\int^{t-r}_{(n-1)r} (-A)^{1-\delta}G(u)du\Big\|\\
&\le C_{\gamma, \delta}\int^{t-r}_{(n-1)r} \frac{(u-(n-1)r)^\gamma}{(t-r-u)^\gamma}\cdot \frac{1}{(u-(n-1)r)^{1-\delta}}du + M_\delta C'_{n-1, \delta}\\
&= C_{\gamma, \delta}B(1-\gamma, \gamma+\delta) + M_\delta C'_{n-1, \delta},\hskip 15pt C_{\gamma, \delta},\, C'_{n-1, \delta},\, M_\delta>0.
\end{split}
\end{equation}
By virtue of (\ref{08/06/18(1)}), (\ref{08/06/18(2)}) and (\ref{08/06/18(3)}), we thus obtain for $t\in (nr, (n+1)r]$ that
\begin{equation}
\label{12/06/18(60)}
\begin{split}
\Big\|&\int^{t-r}_0 e^{(t-u-r)A}\Gamma(r) A_2 G(u)du\Big\| \le (M_\delta + C_{\gamma, \delta}B(1-\gamma, \gamma+\delta))\Big(\sum^{n-1}_{j=0} C'_{j, \delta} +1\Big).
\end{split}
\end{equation}

Now we intend to estimate $I_4(t)$. To this end, we have by virtue of (H), (\ref{10/06/18(5)}) and induction assumption that for sufficiently small $\delta>0$,
\[
\begin{split}
\Big\|\int^{nr}_{t-r} \Gamma(t-u)A_2 G(u)du\Big\|
 &\le \int^{nr}_{t-r} \|\Gamma(t-u)-\Gamma(r)\|\cdot \|A_2(-A)^{-(1-\delta)}\|\cdot\|(-A)^{1-\delta}G(u)\|du\\
  &\,\,\,\,\,+ \|\Gamma(r)\| \cdot\|A_2(-A)^{-(1-\delta)}\|\cdot \Big\|\int^{nr}_{t-r} (-A)^{1-\delta} G(u)du\Big\|\\
&\le \int^{nr}_{t-r} C_{\beta,\gamma, \delta}(u-t+r)^{\beta}\cdot \frac{C'_{n-1}}{(u-t+r)^{1-\delta}}du + \|\Gamma(r)\|C'_{n-1, \delta}\\
&\le \frac{C_{\beta,\gamma, \delta}C'_{n-1}r^{(\beta+\delta)}}{\beta+\delta} + \|\Gamma(r)\|\cdot C'_{n-1, \delta},
\end{split}
\]
where $0<\beta<1-\gamma$ and $C_{\beta, \gamma, \delta},\,\,C'_{n-1, \delta}>0$.

Last, we are in a position to estimate $I_5(t)$. By assumption, we can obtain in terms of Corollary \ref{14/06/18(1)} and induction assumption with $\delta>0$ sufficiently small that for  $nr<u\le (n+1)r$,
\begin{equation}
\label{08/06/18(5)}
\begin{split}
&\Big\|(-A)^\nu \int^u_{nr} e^{(u-s)A} A_1 G(s-r)ds\Big\|\\
 &\le \Big\|\int^u_{nr} (-A)^\nu (e^{(u-s)A} - e^{(u-nr)A})A_1(-A)^{-(1-\delta)}(-A)^{1-\delta} G(s-r)ds\Big\|\\
  &\,\,\,\,\,+ \|(-A)^\nu e^{(u-nr)A}\|\cdot \|A_1(-A)^{-(1-\delta)}\|\cdot\Big\|\int^u_{nr}(-A)^{1-\delta} G(s-r)ds\Big\|\\
 &\le C_{\tilde\beta, \nu, \delta}\int^u_{nr} \frac{(s-nr)^{\tilde\beta}}{(u-s)^{\tilde\beta+\nu}}\cdot \frac{C'_{\tilde\beta, n}}{(s-nr)^{1-\delta}}ds + \frac{C'_{n, \delta}\cdot M_\nu}{(u-nr)^\nu},\hskip 15pt C_{\tilde\beta, \nu, \delta},\,C'_{\tilde\beta, n},\, C'_{n, \delta},\, M_\nu>0,
\end{split}
\end{equation}
where $0<\tilde\beta<1-\nu$. Hence, we have for $t\in (nr, (n+1)r]$ that
\begin{equation}
\label{08/06/18(575690)}
\begin{split}
\|I_5(t)\| &\le \int^t_{nr} \|\Gamma(t-u)\|\cdot \|A_2(-A)^{-\nu}\|\cdot \Big\|(-A)^\nu \int^u_{nr} e^{(u-s)A} A_1 G(s-r)ds\Big\|du\\
&\le \|\Gamma\|_\infty  \|A_2(-A)^{-\nu}\|\int^t_{nr}\int^u_{nr} \frac{C_{\tilde\beta, \nu, \delta}C'_{\tilde\beta, n}(s-nr)^{\tilde\beta+\delta-1}}{(u-s)^{\nu+\tilde\beta}}dsdu + \int^t_{nr}\frac{C'_{n, \delta}M_\nu}{(u-nr)^\nu}du\\
&\le \frac{r^{1-\nu}\|\Gamma\|_\infty  \|A_2(-A)^{-\nu}\|}{1-\nu-\tilde\beta}\cdot \frac{C_{\tilde\beta, \nu, \delta}C'_{\tilde\beta, n}}{\tilde\beta+\delta} + \frac{C'_{n, \delta}\cdot r^{\nu}M_\nu}{1-\nu}.
\end{split}
\end{equation}
By combining (\ref{07/06/18(14)}), (\ref{12/06/18(60)}) with (\ref{08/06/18(575690)}) and noticing (\ref{10/05/19(1)}),  we obtain that $V_0(t)$ and $V(t)$ both are uniformly bounded in $[nr, (n+1)r]$.

Finally, for $nr<t\le (n+1)r$,
we have by (H) and induction assumption that for sufficiently small $\delta>0$,
\begin{equation}
\label{08/06/18(567036767)}
\begin{split}
&\Big\|(-A)^\gamma \int^t_{nr} e^{(t-s)A} A_1 G(s-r)ds\Big\|\\
&\le  \Big\|\int^t_{nr} (-A)^\gamma(e^{(t-s)A}-e^{(t-nr)A})A_1(-A)^{-(1-\delta)}(-A)^{1-\delta}G(s-r)ds\Big\|\\
&\,\,\,\,\, + \|(-A)^\gamma e^{(t-nr)A}\|\cdot \Big\|\int^t_{nr} A_1(-A)^{-(1-\delta)}(-A)^{1-\delta}G(s-r)ds\Big\|\\
 &\le C'_{\beta, \gamma, n}\|A_1(-A)^{-(1-\delta)}\|\int^t_{nr} (t-s)^{1-\beta-\gamma-1}\cdot (s-nr)^{\beta-1+\delta}ds + \frac{C'_n\|A_1(-A)^{-(1-\delta)}\| M_{\gamma, \delta}}{(t-nr)^\gamma}\\
 &\le \{C''_{\beta, \gamma, n}\cdot B(1-\beta-\gamma, \beta+\delta) + C'_n\|A_1(-A)^{-(1-\delta)}\| M_{\gamma, \delta}\}/(t-nr)^\gamma,
\end{split}
\end{equation}
where $C'_{\beta, \gamma, n},\,C''_{\beta, \gamma, n}, C'_{n, \delta},\, M_{\gamma, \delta}>0$, $0<\beta<1-\gamma$ and $B(\cdot, \cdot)$ is the Beta function.

 Now the inequality (\ref{4567}) for $t\in (nr, (n+1)r]$ follows from  (\ref{08/06/18(567036767)}) and
the equality
\begin{equation}
\label{08/06/18(5756906)}
(-A)^\gamma G(t) = (-A)^\gamma \int^t_{nr} e^{(t-s)A}A_1 G(s-r)ds + V(t).
\end{equation}
On the other hand, by using (\ref{08/06/18(567036767)}), we have that
\[
\begin{split}
\Big\|\int^t_{nr}(-A)^\gamma G(u)du\Big\| & \le \int^t_{nr} \Big\|(-A)^\gamma \int^u_{nr} e^{(u-s)A}A_1G(s-r)ds\Big\|du + \int^t_{nr}\|V(u)\|du\\
&\le C'''_{\gamma, n}\Big(\frac{r^{1-\gamma}}{1-\gamma} +1\Big),\,\,\,\,\,C'''_{\gamma, n}>0,
\end{split}
\]
is uniformly bounded in $[nr, (n+1)r]$, a fact which implies (\ref{4569}). The proof is complete now.

\section{Proof of Theorem 2.1 (b)}

We still want to develop an induction scheme here. First, consider $V(t)$ and $V_0(t)$, $t\in [0, r]$, defined in (\ref{01/06/18(2)}).
\begin{lemma}
\label{01/06/18(10)}
For any $0< s<t< r$ and $0<\beta<1-\gamma$, there exists a constant $M_{\beta, \gamma}>0$ such that
\begin{equation}
\label{01/06/18(3)}
\|V(t)-V(s)\|\le M_{\beta, \gamma}(t-s)^{\beta}s^{-\beta}.
\end{equation}
\end{lemma}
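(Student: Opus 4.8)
The plan is to use the Volterra equation (\ref{01/06/18(2)}) and estimate the increment $V(t)-V(s)$ by splitting it into the increment of the forcing term $V_0$ and the increment of the convolution term $R(t):=\int_0^t\Gamma(t-u)A_2(-A)^{-\gamma}V(u)\,du$. Since the uniform bound $v_0=\sup_{[0,r]}\|V(t)\|<\infty$ is already available, no Gronwall argument is needed at this stage; the convolution term is controlled directly. Throughout I would use that on $(0,r)$ one has $s^{-\beta}\ge r^{-\beta}$, so any clean bound $C(t-s)^{\beta}$ is absorbed into the desired singular bound via $(t-s)^{\beta}\le r^{\beta}(t-s)^{\beta}s^{-\beta}$. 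I would also use that $A_2(-A)^{-\gamma}=[A_2(-A)^{-\nu}](-A)^{-(\gamma-\nu)}$ is bounded by (H) and $\gamma\ge\nu$, and that $\|A_2e^{uA}\|\le\|A_2(-A)^{-\nu}\|M_\nu u^{-\nu}$.

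First I would establish $\|V_0(t)-V_0(s)\|\le C(t-s)^{\beta}s^{-\beta}$ using the decomposition $V_0=P+Q$ with $P(t)=\int_0^t(\Gamma(t-u)-\Gamma(t))A_2e^{uA}\,du$ and $Q(t)=\Gamma(t)A_2A^{-1}(e^{tA}-I)$. For $Q$, writing $Q(t)-Q(s)=[\Gamma(t)-\Gamma(s)]A_2A^{-1}(e^{tA}-I)+\Gamma(s)A_2A^{-1}(e^{tA}-e^{sA})$, the first piece is $O((t-s)^{\beta})$ by Proposition \ref{31/05/18(1)}, while the second produces the genuine $s^{-\beta}$ singularity through (\ref{01/06/18(1)}) with $\alpha=\beta$. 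For $P$, I would split $\int_0^t-\int_0^s=\int_s^t+\int_0^s$; on $[s,t]$ I bound $\|\Gamma(t-u)-\Gamma(t)\|\le 2\|\Gamma\|_\infty$ and use $\int_s^t u^{-\nu}\,du\le(t-s)^{1-\nu}/(1-\nu)$ via (\ref{16/5/18(1)}), whereas on $[0,s]$ I bound the second difference $[\Gamma(t-u)-\Gamma(t)]-[\Gamma(s-u)-\Gamma(s)]$ by $2C_{\beta,\gamma}(t-s)^{\beta}$ and integrate $u^{-\nu}$ over $[0,s]$. Both pieces are $O((t-s)^{\beta})$ and absorbed as above.

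Next I would treat $R(t)-R(s)=\int_s^t\Gamma(t-u)A_2(-A)^{-\gamma}V(u)\,du+\int_0^s[\Gamma(t-u)-\Gamma(s-u)]A_2(-A)^{-\gamma}V(u)\,du$. The first term is at most $\|\Gamma\|_\infty\|A_2(-A)^{-\gamma}\|v_0(t-s)$, and the second at most $\|A_2(-A)^{-\gamma}\|v_0\int_0^s\|\Gamma(t-u)-\Gamma(s-u)\|\,du\le C(t-s)^{\beta}s$, using the H\"older continuity of $\Gamma$ together with $(t-u)-(s-u)=t-s$. Both are $O((t-s)^{\beta})$ and hence absorbed into the singular bound. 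Adding the two increment estimates yields $\|V(t)-V(s)\|\le M_{\beta,\gamma}(t-s)^{\beta}s^{-\beta}$.

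The main subtlety is to locate precisely where the unavoidable $s^{-\beta}$ enters: it is not a defect of the convolution term, which is harmless thanks to $v_0<\infty$, but originates solely from the forcing term $V_0$, namely from the semigroup increment $e^{tA}-e^{sA}$ as $t,s\to 0$ inside $Q$. Care is also required with the factor $\|A_2e^{uA}\|\sim u^{-\nu}$ in $P$: integrability near $u=0$ holds because $\nu<1$, and on $[s,t]$ the inequality (\ref{16/5/18(1)}) converts $\int_s^t u^{-\nu}\,du$ into the clean order $(t-s)^{1-\nu}\le C(t-s)^{\beta}$, which is legitimate since $1-\nu\ge 1-\gamma>\beta$.
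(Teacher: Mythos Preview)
Your argument is correct. The overall strategy coincides with the paper's --- split $V(t)-V(s)$ into the increment of $V_0$ and the increment of the convolution term --- but the two differ in how $V_0(t)-V_0(s)$ is handled. The paper works directly with the convolution form $V_0(t)=\int_0^t\Gamma(t-u)A_2e^{uA}\,du$ and splits the difference as $I_1+I_2$ with $I_1=\int_s^t\Gamma(t-u)A_2e^{uA}\,du$ and $I_2=\int_0^s[\Gamma(t-u)-\Gamma(s-u)]A_2e^{uA}\,du$; the genuine $s^{-\beta}$ singularity is then produced by $I_1$ via $A_2e^{uA}=A_2A^{-1}\cdot Ae^{uA}$, $\|Ae^{uA}\|\le M/u$ and the elementary bound $\log(1+(t-s)/s)\le\beta^{-1}((t-s)/s)^\beta$. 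You instead reuse the $P+Q$ decomposition from Section~4 and locate the singularity in $\Gamma(s)A_2A^{-1}(e^{tA}-e^{sA})$ through (\ref{01/06/18(1)}). Since $e^{tA}-e^{sA}=\int_s^t Ae^{uA}\,du$, this is the same source repackaged; the paper's route is a little shorter because it avoids reopening the $P+Q$ split, while yours makes the origin of the singularity slightly more transparent. For the convolution term $R$, your direct estimate using the a~priori bound $v_0<\infty$ (already obtained in Section~4) is exactly what is needed; the paper concludes this step tersely by citing ``(\ref{10/05/19(1)}) and the Gronwall inequality,'' which amounts to the same thing, and your version is arguably cleaner in spelling it out.
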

\begin{proof}  We notice, by definition, for $0< s<t< r$ that
\begin{equation}
\label{11/06/18(1)}
\begin{split}
V_0(t)-V_0(s) = &\,\, \int^t_s \Gamma(t-u)A_2 e^{uA}du + \int^s_0 (\Gamma(t-u)A_2 e^{uA} -\Gamma(s-u)A_2 e^{uA})du\\
=:&\,\, I_1(s, t) + I_2(s, t).
\end{split}
\end{equation}
For $0< s< t< r$, it is easy to see from (\ref{14/06/18(10)}) that
\begin{equation}
\label{11/06/18(2)}
\begin{split}
\|I_1(s, t)\| &\le \int^t_s \|\Gamma(t-u)\|\cdot \|A_2 A^{-1}\|\cdot \|A e^{uA}\|du\\
&\le  M\|\Gamma\|_\infty\cdot \|A_2 A^{-1}\| \ln\Big(1+\frac{t-s}{s}\Big)\le \frac{M\|\Gamma\|_\infty\cdot \|A_2 A^{-1}\|}{\beta}(t-s)^{\beta}s^{-\beta},
\end{split}
\end{equation}
where $M>0$ and by Proposition \ref{31/05/18(1)}, we have for $0< s< t< r$ that
\begin{equation}
\label{11/06/18(3)}
\begin{split}
\|I_2(s, t)\| &\le \int^s_0 \|\Gamma(t-u)-\Gamma(s-u)\|\cdot\|A_2(-A)^{-\nu}\|\cdot \|(-A)^\nu e^{uA}\|du\\
&\le C_{\beta, \gamma}\int^s_0 (t-s)^{\beta} \|A_2(-A)^{-\nu}\|\cdot \frac{M_\nu}{u^\nu}du\\
&\le C_{\beta, \gamma}M_\nu  \frac{\|A_2(-A)^{-\nu}\|}{1-\nu}r^{1-\nu+\beta}(t-s)^{\beta}s^{-\beta},\hskip 15pt C_{\beta, \gamma}>0,\,\,\,\, M_\nu>0.
\end{split}
\end{equation}
Hence, by combining (\ref{11/06/18(1)}), (\ref{11/06/18(2)}) and (\ref{11/06/18(3)}),  we get for $0< s<t< r$ that
\[
\|V_0(t)- V_0(s)\| \le M_{\beta, \gamma}(t-s)^{\beta} s^{-\beta},\hskip 15pt M_{\beta, \gamma}>0,\]
from which the desired result follows by (\ref{10/05/19(1)}) and the well-known Gronwall inequality.
\end{proof}

By virtue of Corollary \ref{14/06/18(1)} and (\ref{01/06/18(3)}), we obtain the estimates in Theorem \ref{21/05/18(1)}
 for $0<s<t< r$:
\[
\begin{split}
\|(-A)^\gamma (G(t) - G(s))\| &\le \|(-A)^\gamma e^{tA} - (-A)^\gamma e^{sA}\| + \|V(t)-V(s)\|\\
&\le M_{\beta, \gamma} \big[(t-s)^{\beta} \cdot s^{-\beta-\gamma} + (t-s)^{\beta} s^{-\beta}\big]\\
&\le (M_{\beta, \gamma} + M_{\beta, \gamma}r^\gamma)(t-s)^{\beta}\cdot s^{-\beta-\gamma},\hskip 10pt M_{\beta, \gamma}>0,\hskip 10pt 0<\beta<1-\gamma.
\end{split}
\]

Now suppose that $G(t)$ satisfies those estimates in Theorem \ref{21/05/18(1)} on the intervals $[0, r]$, $\cdots$, $[(n-1)r, nr]$. Then in the interval $[nr, (n+1)r]$, the integral equation to be satisfied by
\begin{equation}
\label{14/06/18(70)}
V(t) = (-A)^\gamma\Big(G(t) - \int^t_{nr} e^{(t-s)A}A_1G(s-r)ds\Big),\hskip 20pt t\in (nr, (n+1)r],
\end{equation}
is
\[
V(t) = V_0(t) + \int^t_{nr} \Gamma(t-u)A_2(-A)^{-\gamma}V(u)du,\]
where $V_0(t)$ is given as in (\ref{07/06/18(10)}).
We first show the H\"older continuity of $V_0(t)$ and $V(t)$. Let $nr<s<t<(n+1)r$. By virtue of (\ref{07/06/18(10)}), we have
\begin{equation}
\label{07/06/18(10678)}
\begin{split}
V_0(t) -V_0(s)
 =&\,\, (-A)^\gamma (e^{tA}- e^{sA})\\
&\,\, + (-A)^\gamma \Big(\int^{nr}_r e^{(t-u)A}A_1G(u-r)du - \int^{nr}_r e^{(s-u)A}A_1G(u-r)du\Big)\\
&\,\, + \Big(\int^{t-r}_0 e^{(t-r-u)A}\Gamma(r)A_2 G(u)du - \int^{s-r}_0 e^{(s-r-u)A}\Gamma(r)A_2 G(u)du\Big)\\
&\,\, + \Big(\int^{nr}_{t-r}\Gamma(t-u)A_2G(u)du -\int^{nr}_{s-r}\Gamma(s-u)A_2G(u)du\Big)\\
&\,\, + \Big(\int^t_{nr} \Gamma(t-u)A_2\int^u_{nr}e^{(u-v)A}A_1G(v-r)dv du\\
&\,\, - \int^s_{nr} \Gamma(s-u)A_2\int^u_{nr}e^{(u-v)A}A_1G(v-r)dv du\Big)\\
 =:&\,\, I_1(t, s) + I_2(t, s) +I_3(t, s)+I_4(t, s) +I_5(t, s).
\end{split}
\end{equation}
First, by virtue of Corollary \ref{15/05/2018(1)} we have
\begin{equation}
\label{13/06/18(0)}
\begin{split}
\|I_1(t, s)\|&\le M_\gamma (t-s)^{\beta} s^{-\beta-\gamma}\\
&\le \frac{M_\gamma}{(nr)^\gamma}(t-s)^{\beta}\cdot (s-nr)^{-\beta},\hskip 15pt M_\gamma>0.
\end{split}
\end{equation}
Now let us consider the item $I_2(t, s)$. For sufficiently small $\delta>0$, there exists, by virtue of (\ref{07/06/18(12)}) and (\ref{01/06/18(1)}), a value $c_{n, \beta, \gamma, \delta}>0$ such that
\begin{equation}
\label{13/06/18(1)}
\begin{split}
\|I_2(s, t)\| &\le \|e^{(t-nr)A}-e^{(s-nr)A}\|\cdot\sum^n_{k=2}\Big\|(-A)^\gamma \int^{kr}_{(k-1)r} e^{(nr-u)A}A_1G(u-r)du\Big\|\\
&\le c_{n, \beta, \gamma, \delta} (t-s)^{\beta}\cdot (s-nr)^{-\beta}.
\end{split}
\end{equation}
 As for the third term on the right side of (\ref{07/06/18(10678)}), we have
\begin{equation}
\label{13/06/18(2)}
\begin{split}
I_3(t, s) =&\,\, \int^{t-r}_{s-r} e^{(t-r-u)A}\Gamma(r) A_2 G(u)du\\
&\,\, + \int^{s-r}_{(n-1)r} \Big(e^{(t-r-u)A}- e^{(s-r-u)A} - e^{(t-nr)A} + e^{(s-nr)A}\Big)\Gamma(r)A_2G(u)du\\
&\,\, + \Big(e^{(t-nr)A} -e^{(s-nr)A}\Big)\Gamma(r)\int^{s-r}_{(n-1)r}A_2G(u)du\\
&\,\, + \Big(e^{(t-nr)A} -e^{(s-nr)A}\Big)\int^{(n-1)r}_{0}e^{((n-1)r-u)A}\Gamma(r) A_2G(u)du\\
=: &\,\, J_1(t, s) +J_2(t, s) +J_3(t, s)+J_4(t, s).
\end{split}
\end{equation}
By virtue of (\ref{10/06/18(2)})
 and induction assumption, we have for sufficiently small $\delta>0$ that
\begin{equation}
\label{05/04/19(1)}
\begin{split}
\|J_1(t, s)\|
 &\le \|(-A)^{-\delta}\|\int^{t-r}_{s-r} \|(-A)^\delta e^{(t-r-u)A}\|\cdot \|\Gamma(r)\|\cdot \|A_2(-A)^{-\nu}\|\cdot \|(-A)^\nu G(u)\|du\\
&\le M_{\gamma, \nu, \delta}\int^{t-r}_{s-r} (t-r-u)^{-\delta}\frac{C_{n, \nu}}{(u-(n-1)r)^\nu}du\\
&\le C_{n, \gamma, \delta,\nu}(t-s)^{\beta}\int^{t-r}_{(n-1)r} (t-r-u)^{-\beta-\delta}(u-(n-1)r)^{-\nu}du\\
&\le C_{n,\gamma,\delta, \nu}\cdot
B(1-\beta-\delta, 1-\nu)(t-nr)^{1-\beta-\delta-\nu}(t-s)^{\beta}\\
&\le M'_{n, \delta, \nu, \gamma}(t-s)^{\beta}\cdot (s-nr)^{-\beta},\hskip 15pt M'_{n, \delta, \nu, \gamma}>0.
\end{split}
\end{equation}
On the other hand, by virtue of (\ref{01/06/18(1)}) we have for $(n-1)r<u<s-r$ and sufficiently small $\delta>0$ that
\begin{equation}
\label{13/06/18(3)}
\begin{split}
\|e^{(t-r-u)A}&\, - e^{(s-r-u)A} - e^{(t-nr)A} + e^{(s-nr)A}\|\\
& \le \|e^{(t-r-u)A} - e^{(s-r-u)A}\| + \|e^{(t-nr)A} - e^{(s-nr)A}\|\\
&\le c'_{\beta, \gamma, \delta}\frac{(t-s)^{\beta+\delta}}{(s-r-u)^{\beta+\delta}} + c'_{\beta, \gamma, \delta} \frac{(t-s)^{\beta+\delta}}{(s-nr)^{\beta+\delta}}\\
&\le 2c'_{\beta, \gamma, \delta}\frac{(t-s)^{\beta+\delta}}{(s-r-u)^{\beta+\delta}},\hskip 20pt c'_{\beta, \gamma, \delta}>0.
\end{split}
\end{equation}
In a similar way, we have by virtue of (\ref{01/06/18(1)}) that
\begin{equation}
\label{13/06/18(4)}
\begin{split}
\|&e^{(t-r-u)A} - e^{(s-r-u)A} - e^{(t-nr)A} + e^{(s-nr)A}\|\\
& \le \|e^{(t-r-u)A} - e^{(t-nr)A}\| + \|e^{(s-r-u)A} - e^{(s-nr)A}\|\\
&\le c''_{\alpha, \gamma, \delta}\frac{(u-(n-1)r)^{\beta +\delta}}{(t-r-u)^{\beta+\delta}} + c''_{\alpha, \gamma, \delta} \frac{(u-(n-1)r)^{\beta+\delta}}{(s-r-u)^{\beta+\delta}}\le 2c''_{\beta, \gamma, \delta}\frac{(u-(n-1)r)^{\beta+\delta}}{(s-r-u)^{\beta+\delta}},
\end{split}
\end{equation}
where $c''_{\beta, \gamma, \delta}>0$.
Combining these two inequalities, it follows that
\begin{equation}
\label{13/06/18(5)}
\begin{split}
\|e^{(t-r-u)A}& - e^{(s-r-u)A} - e^{(t-nr)A} + e^{(s-nr)A}\|\\
&\le C_{\beta, \delta} (t-s)^{\beta}(s-r-u)^{-(\beta+\delta)}(u-(n-1)r)^{\delta},\hskip 15pt C_{\beta, \delta}>0,
\end{split}
\end{equation}
for $0<\beta<1-\gamma$ and sufficiently small $\delta>0$. With the aid of (\ref{13/06/18(5)}), we  have for sufficiently small $\delta>0$ that
\begin{equation}
\label{13/06/18(6)}
\begin{split}
\|J_2(t, s)\|
 &\le C_{\beta, \delta} \int^{s-r}_{(n-1)r} (t-s)^{\beta}(s-r-u)^{-(\beta+\delta)}(u-(n-1)r)^{\delta}\\
&\,\,\,\,\,\cdot \|A_2(-A)^{-\nu}\|\cdot\|\Gamma(r)\|\frac{C_{n-1, \delta}}{(u-(n-1)r)^{\nu}}du\\
&= C'_{\beta, \delta} \|\Gamma(r)\|(t-s)^{\beta}\int^{s-r}_{(n-1)r} (s-r-u)^{-(\beta+\delta)}(u-(n-1)r)^{\delta-\nu}du\\
&= c'''_{\beta, \nu, \delta, n}B(1-\beta-\delta, 1+\delta-\nu)(t-s)^{\beta}(s-nr)^{-\beta},\hskip 20pt c'''_{\beta, \gamma, \delta, n}>0.
\end{split}
\end{equation}
In a similar manner, one can have by virtue of (\ref{10/06/18(10)}) and (\ref{14/06/18(10)})
 that
\begin{equation}
\label{13/06/18(7)}
\begin{split}
\|J_3(t, s)\| &\le \|e^{(t-nr)A}-e^{(s-nr)A}\|\cdot \|\Gamma(r)\|\cdot \|A_2(-A)^{-\nu}\|\cdot \Big\|\int^{s-r}_{(n-1)r}(-A)^\nu G(u)du\Big\|\\
&\le c''''_{\beta, \nu, n}\|\Gamma(r)\|\Big(\frac{t-s}{s-nr}\Big)^{\beta},\hskip 15pt c''''_{\alpha,\nu, n}>0,
\end{split}
\end{equation}
and
\begin{equation}
\label{13/06/18(8)}
\begin{split}
\|J_4(t, s)\|
&\le \|e^{(t-nr)A}-e^{(s-nr)A}\|\cdot \|\Gamma(r)\|\cdot \|A_2(-A)^{-\nu}\|\sum^{(n-2)r}_{j=0}\int^{(j+1)r}_{jr}\|(-A)^\nu G(u)\|du\\
&\le c'''''_{\beta, \nu, n} \Big(\frac{t-s}{s-nr}\Big)^{\beta}, \hskip 20pt c'''''_{\beta, \nu, n}>0.
\end{split}
\end{equation}
Combining (\ref{13/06/18(2)})-(\ref{13/06/18(8)}), we conclude that for some $M''_{\alpha, \gamma}>0$,
\[
\begin{split}
\Big\|\int^{t-r}_0 e^{(t-r-u)A}&\Gamma(r)A_2G(u)du - \int^{s-r}_0 e^{(s-r-u)A}\Gamma(r)A_2G(u)du\Big\|\\
&\le {M}''_{\beta, \gamma} (t-s)^{\beta} (s-nr)^{-\beta}.
\end{split}
\]
For the item $I_4(t, s)$, let $(n-1)r<s-r<t-r<u<nr$ and by Proposition \ref{31/05/18(1)}, (\ref{14/06/18(10)}), (\ref{05/04/19(1)}) and induction assumption, we may obtain for sufficiently small $\delta>0$ that
\begin{equation}
\label{13/06/18(999)}
\begin{split}
\Big\|&\int^{nr}_{t-r} \Gamma(t-u)A_2G(u)du - \int^{nr}_{s-r} \Gamma(s-u)A_2G(u)du\Big\|\\
 &= \Big\|\int^{nr}_{t-r} (\Gamma(t-u) - \Gamma(s-u))(A_2(-A)^{-\nu})(-A)^\nu G(u)du\\
  &\,\,\,\,\,- \int^{t-r}_{s-r} \Gamma(s-u)(A_2(-A)^{-(1-\delta)})(-A)^{1-\delta} G(u)du\Big\|\\
 &\le C_{n, \beta, \gamma, \nu}\int^{nr}_{t-r} (t-s)^{\beta}\cdot \frac{1}{(u-(n-1)r)^\nu}du\\
  &\,\,\,\,\,+ \|\Gamma\|_\infty C_{n-1, \nu, \delta} \int^{t-r}_{s-r}\frac{1}{(u-(n-1)r)^{1-\delta}}du\\
 &\le C_{n, \beta, \gamma, \nu, \delta}(t-s)^{\beta}\int^{nr}_{t-r} \frac{1}{(u-(n-1)r)^\nu}du\\
  &\,\,\,\,\,+ \|\Gamma\|_\infty C_{n-1, \nu, \delta} (t-s)^{\beta} \int^{t-r}_{(n-1)r} (t-r-u)^{-\beta}\frac{1}{(u-(n-1)r)^{1-\delta}}du\\
 &\le C_{n, \beta, \gamma, \nu, \delta} (t-s)^{\beta}\frac{r^{1-\nu}}{1-\nu} + C'_{n-1, \gamma, \delta}B(1-\beta, \delta)(t-s)^{\beta} (t-nr)^{-\beta+\delta}\\
 &\le C''_{n, \beta, \gamma, \delta}r^\delta (t-s)^{\beta} (s-nr)^{-\beta},\hskip 15pt C''_{n, \beta, \gamma}>0.
\end{split}
\end{equation}
In a similar way, we can have
\begin{equation}
\label{13/06/18(9)}
\begin{split}
\Big\|\int^t_{nr} &\Gamma(t-u)A_2\int^u_{nr}e^{(u-v)A}A_1G(v-r)dv du - \int^s_{nr} \Gamma(s-u)A_2\int^u_{nr}e^{(u-v)A}A_1G(v-r)dvdu\Big\|\\
&\le {C}'''_{n, \beta, \gamma} (t-s)^{\beta}(s-nr)^{-\beta},\hskip 15pt {C}'''_{n, \beta, \gamma}>0.
\end{split}
\end{equation}
Combining (\ref{07/06/18(10678)})-(\ref{13/06/18(9)}), we conclude that
\[
\|V_0(t) - V_0(s)\|\le M'_{n, \beta, \gamma}(t-s)^{\beta}(s-nr)^{-\beta},\hskip 15pt M'_{n, \beta, \gamma}>0,\]
and further we have
\begin{equation}
\label{13/06/18(978904)}
\|V(t) - V(s)\|\le M''_{n, \beta, \gamma}(t-s)^{\beta}(s-nr)^{-\beta},\hskip 15pt M''_{n, \beta, \gamma}>0,
\end{equation}
for $nr<s<t< (n+1)r$.

On the other hand,  we have by assumption that
\begin{equation}
\label{14/06/18(50)}
\begin{split}
(-A)^\gamma \int^t_{nr} & e^{(t-u)A} A_1G(u-r)du - (-A)^\gamma \int^s_{nr} e^{(s-u)A} A_1 G(u-r)du\\
&= \int^t_s (-A)^\gamma e^{(t-u)A}(A_1(-A)^{-(1-\delta)})(-A)^{1-\delta} G(u-r)du\\
&\,\,\,\, + \int^s_{nr} (-A)^\gamma(e^{(t-u)A}- e^{(s-u)A})(A_1(-A)^{-(1-\delta)})(-A)^{1-\delta} G(u-r)du\\
&=: K_1(t, s) + K_2(t, s),
\end{split}
\end{equation}
for any $nr<s<t< (n+1)r$ and sufficiently small $\delta>0$. Hence, we have by the induction hypothesis and Theorem
\ref{21/05/18(1)} that
\begin{equation}
\label{14/06/18(52)}
\begin{split}
\|K_1(t, s)\| &\le \int^t_s \|(-A)^\gamma e^{(t-u)A}\|\cdot \|A_1(-A)^{-(1-\delta)}\|\cdot \|(-A)^{1-\delta} G(u-r)\|du\\
&\le M(t-s)^\beta\int^t_{nr} (t-u)^{-\beta-\gamma}\|A_1 (-A)^{-(1-\delta)}\|\frac{C_{n, \nu}}{(u-nr)^{1-\delta}}du\\
&\le MC_{n, \nu}\|A_1(-A)^{-(1-\delta)}\|\cdot
B(1-\beta-\gamma, \delta)(t-nr)^{1-\beta-\gamma-1+\delta}(t-s)^{\beta}\\
&\le M'_{n, \beta, \nu, \gamma, \delta}(t-s)^{\beta}\cdot (s-nr)^{-\beta-\gamma},\hskip 15pt M'_{n, \beta, \nu, \gamma, \delta}>0.
\end{split}
\end{equation}
In a similar manner, we have
\begin{equation}
\label{14/06/18(53)}
\begin{split}
\|K_2(t, s)\| &\le \int^s_{nr} M\|A_1(-A)^{-(1-\delta)}\|\cdot (t-s)^{\beta}\cdot (s-nr)^{-\beta-\gamma} (u-nr)^{-(1-\delta)}du\\
&\le \frac{MC_{n, \nu}\|A_1(-A)^{-(1-\delta)}\|(t-s)^{\beta}}{\delta}(s-nr)^{-\beta-\gamma}{(s-nr)^{\delta}}\\
&\le M''_{n, \delta, \beta, \gamma}(t-s)^{\beta}(s-nr)^{-\beta-\gamma},\hskip 15pt M''_{n,\delta, \beta, \gamma}>0.
\end{split}
\end{equation}
With the aid of (\ref{14/06/18(50)})-(\ref{14/06/18(53)}), we thus obtain
 \begin{equation}
\label{14/06/18(55)}
\begin{split}
\Big\|(-A)^\gamma \int^t_{nr}&  e^{(t-u)A} A_1G(u-r)du - (-A)^\gamma \int^s_{nr} e^{(s-u)A} A_1 G(u-r)du\Big\|\\
& \le C_{n, \beta, \gamma}\cdot (t-s)^{\beta}(s-nr)^{-\beta-\gamma}
\end{split}
\end{equation}
for some $C_{n, \beta, \gamma}>0$. Now combining (\ref{14/06/18(70)}), (\ref{13/06/18(978904)}) and (\ref{14/06/18(55)}), we finally obtain (\ref{14/06/18(80)}).
 The proof is thus complete.

\begin {thebibliography}{17}

 \bibitem{cbdgme196705} B.D. Coleman and M.E. Gurtin. Equipresence and constitutive equations for rigid heat conductors. {\it Z. Angew. Math. Phys.} {\bf 18}, (1967), 199--208.

      \bibitem{gdajz92}   G. Da Prato and J. Zabczyk.
  {\it Stochastic Equations in Infinite Dimensions.}  Second Edition, Encyclopedia of
Mathematics and its Applications, Cambridge University Press, (2014).

\bibitem{Gdbkkes85(2)} G. Di Blasio, K. Kunisch and E. Sinestrari. Stability for abstract linear functional differential equations. {\it Israel J. Math.} {\bf 50}, (1985), 231--263.

\bibitem{jj1991} J. Jeong. Stabilizability of retarded functional differential equation in Hilbert space. {\it Osaka J. Math.} {\bf 28}, (1991), 347--365.

\bibitem{jjsnht93} J. Jeong, S.I. Nakagiri and H. Tanabe. Structural operators and semigroups associated with functional differential equations in Hilbert spaces. {\it Osaka J. Math.} {\bf 30}, (1993), 365--395.

     \bibitem{njw1971} J.W. Nunziato. On heat conduction in materials with memory. {\it Quart. Appl. Math.} {\bf 29}, (1971), 187--204.

 \bibitem{apazy83}   A. Pazy.
 {\it Semigroups of Linear Operators and Applications to Partial Differential
Equations.} Applied Mathematical Sciences, Vol. {\bf 44}. Springer Verlag, New York,
(1983).

\bibitem{espr83} J. Pr\"uss. On resolvent operators for linear integrodifferential equations of Volterra type. {\it J. Integral Equations.} {\bf 5}, (1983), 211--236.

\bibitem{es83} E. Sinestrari. On a class of retarded partial differential equations of Volterra type. {\it Math. Z.} {\bf 186}, (1984), 223--246.

\bibitem{es84} E. Sinestrari. A noncompact differentiable semigroup arising from an abstract delay equation. {\it C. R. Math. Rep. Acad. Sci. Canada.} {\bf 6}, (1984), 43--48.

\bibitem{ht88(1)} H. Tanabe. On fundamental solution of differential equation with time delay in Banach space. {\it Proc. Japan Acad.} {\bf 64}, (1988), 131--180.

\bibitem{ht92} H. Tanabe. Fundamental solutions for linear retarded functional differential equations in Banach spaces. {\it Funkcialaj Ekvacioj.} {\bf 35}, (1992), 149--177.

\end{thebibliography}

\end{document}